\definecolor{lgray}{gray}{0.75}
\newtheorem{theorem}{\bf Theorem}
\newtheorem{corollary}[theorem]{\bf Corollary}
\newtheorem{lemma}[theorem]{\bf Lemma}
\newtheorem{proposition}[theorem]{\bf Proposition}
\newtheorem{problem}{\bf Problem}
\newtheorem{question}{\bf Question}
\newtheorem{remark}[theorem]{\bf Remark}
\newcommand{\proof}{\noindent{\bf Proof.\ }}
\newcommand{\qed}{\hfill $\Box$ \bigskip}
\newcommand{\cig}{\chi_{ig}}
\newcommand{\cigas}{\chi_{ig}^{As}}
\newcommand{\cigA}{\chi_{ig}^{A}}
\newcommand{\cigB}{\chi_{ig}^{B}}
\newcommand{\cigAB}{\chi_{ig}^{AB}}
\newcommand{\cigBA}{\chi_{ig}^{BA}}
\begin{document}

\title{The independence coloring game on graphs}
\author{
Bo\v stjan Bre\v sar$^{a,b}$
\and
Da\v sa \v Stesl$^{a}$
}

\date{\empty}
\maketitle

\vspace{-5mm}

\begin{center}
$^a$ Faculty of Natural Sciences and Mathematics, University of Maribor, Slovenia\\
\medskip

$^b$ Institute of Mathematics, Physics and Mechanics, Ljubljana, Slovenia\\
\medskip
\end{center}

\begin{abstract}
We propose a new coloring game on a graph, called the independence coloring game, which is played by two players with opposite goals. The result of the game is a proper coloring of vertices of a graph $G$, and Alice's goal is that as few colors as possible are used during the game, while Bob wants to maximize the number of colors.  The game consists of rounds, and in round $i$, where $i=1,2,,\ldots$, the players are taking turns in selecting a previously unselected vertex of $G$ and giving it color $i$ (hence, in each round the selected vertices form an independent set). The game ends when all vertices of $G$ are selected (and thus colored), and the total number of rounds during the game when both players are playing optimally with respect to their goals, is called the independence game chromatic number, $\cig(G)$, of $G$. In fact, four different versions of the independence game chromatic number are considered, which depend on who starts a game and who starts next rounds. We prove that the new invariants lie between the chromatic number of a graph and the maximum degree plus $1$, and characterize the graphs in which each of the four versions of the game invariant equals $2$. We compare the versions of the independence game chromatic number among themselves and with the classical game chromatic number. In addition, we prove that the independence game chromatic number of a tree can be arbitrarily large. 
\end{abstract}

\noindent \textbf{Key words}: graph, coloring, coloring game, competition-independence game, game chromatic number, tree\bigskip

\noindent \textbf{AMS subject classification (2010)}: 05C57, 05C05, 05C15


\section{Introduction}
A game counterpart of the graph coloring problem was introduced independently by Brams (see Gardner~\cite{ga-81}) and Boedlander\cite{bo-1991} with the original motivation to shed some light on the four-color problem and on the complexity issues related to graph coloring. Yet, the coloring game turned out to be of independent interest, and a number of authors studied this game and several of its variations; see two surveys~\cite{bagr-07,tu-2016} and four papers introducing additional versions of graph games that are related to coloring~\cite{bosek, gr-2012,kir-2012,mpw-2018}. One of the natural related games, the marking game, provided an application to a non-game problem of packing; see Kierstead and Kostochka~\cite{kiko-2009}. Most of the mentioned games are Maker-Breaker-type games, in which the set of colors is fixed in advance, and two players with opposite goals are trying to win the game using the given set of colors. The coloring game proposed in this paper is of different nature, which is similar to the domination game, as introduced in~\cite{brklra-2010}. This well studied type of graph game is also played by two players with opposite goals, yet the result of the game is not the win of one of the players, but a graph invariant, which is given by the total number of moves in a game, in which both players play optimally according to their goals.

The {\em independence coloring game} is a game played on a simple graph $G$ by two players, Alice and Bob.
The game consists of rounds. In each round, the players take turns by selecting a previously uncolored vertex and giving it the color of the round. In the first round vertices are colored by color $1$, in the second round they are colored by color $2$, and so on. In each round, the set of chosen vertices must be an independent set. A round ends when none of the unchosen vertices can be properly colored by the color of the round. That is, the first round ends when the set of chosen vertices is a maximal independent set of $G$, while for $k\ge 2$, the $k$th round ends when the set of chosen vertices in that round is a maximal independent set in the graph $G-C^1\cup \cdots \cup C^{k-1}$, where $C^i$ denotes the set of vertices chosen in round $i$ for each $i\in\{1,2,\ldots ,k-1\}$. The game ends when all vertices have been chosen. Clearly, the total number of rounds coincides with the number of colors that are used in the entire game, and the resulting coloring is a proper coloring. The goal of Alice is to minimize the number of rounds (colors), while Bob wants to maximize that number. 

We consider four types of games depending on who starts each round. If Alice (resp. Bob) starts the game, and each further round is started by the player who did not end the previous round, then we speak about an {\em AB-independence coloring game} (respectively, {\em BA-independence coloring game}). The number of rounds played on a graph $G$ when both players are playing optimally in an AB-independence coloring game (respectively, BA-independence coloring game) is the {\em AB-independence game chromatic number}, $\cigAB(G)$, (respectively, 
{\em BA-independence game chromatic number}, $\cigBA(G)$) of $G$. On the other hand, if Alice (Bob) starts each round in the game, then this is called an {\em A-independence coloring game} (respectively, a {\em B-independence coloring game}), and the resulting number of rounds is the {\em A-independence game chromatic number}, $\cigA(G)$, (respectively, the {\em B-independence game chromatic number}, $\cigB(G)$) of $G$. Finally, we speak about the {\em independence coloring game} on $G$ and the {\em independence game chromatic number}, $\cig(G)$, when it is not important who starts the game and further rounds (that is, the corresponding result does not depend on the type of the independence coloring game).

The independence coloring game is closely related to the {\em independent domination game}, initiated by Philips and Slater\cite{ph-sl-2001} under the name {\em competition-independence game}, and developed by Goddard and Henning~\cite{go-he-2018}. The game is played by two players, Diminisher and Sweller, on a graph $G$, who are taking turns in constructing a maximal independent set of $G$. The goal of Diminisher is to make the final set as small as possible and Sweller's goal is just the opposite. Note that 
the rules of the first round of the independence coloring game coincide with the rules of the competition-independence game. However, it is not clear under which conditions the first round of our game and the competition-independence game coincide. 

The independence game chromatic number shares two general bounds with many other coloring games. Firstly, it is clear that the chromatic number $\chi(G)$ is a lower bound for the independence game chromatic number, $\cig(G)$, for an arbitrary graph $G$. Secondly,  $\cig(G)\le \Delta + 1$, where $\Delta$ is the maximum degree of vertices in $G$. On the other hand, unlike in the standard coloring game, the independence coloring game in trees and planar graphs may require an arbitrarily large number of colors (see Section~\ref{sec:trees}). In the proof of the result about trees, we introduce a version of the game in which Alice may skip moves and also use a certain type of imagination strategy, which are established tools in the study of domination games~\cite{brklra-2010,KWZ-2013,klra-2019}.  

In Section~\ref{sec:not-bas} we fix the notation and present some basic results. We prove the mentioned upper bound for the independence game chromatic number of a graph, and present exact values for all versions of the independence game chromatic numbers in paths and cycles. In Section~\ref{sec:extremal}, we present some families of graphs that achieve extremal values of the independence game chromatic numbers. While it is clear that $\cig(G)=1$ if and only if $G$ is an edgeless graph, we characterize the graphs $G$, whose independence game chromatic  numbers equal $2$ in each of the four types of the games. We also present a large family of cubic graphs with $\cig(G)=4$, and prove that split graphs $G$ satisfy $\cigA(G)=\chi(G)$. (A graph $G$ is a {\em split graph} if the vertex set of $G$ can be partitioned into two parts one inducing an independent set and the other one inducing a complete graph.) Section~\ref{sec:trees} is devoted to one of our main results that for every positive integer $k$ there exists a tree whose independence game chromatic number is greater than $k$ (we give a formal proof of this result for three of the four independence game chromatic numbers).  In Section~\ref{sec:compare} we compare different versions of the independence game chromatic numbers, and prove that the difference between all ordered pairs of the invariants, with only two possible exceptions, can be arbitrarily large (the question remains whether $\cigAB(G)-\cigA(G)$ and $\cigBA(G)-\cigA(G)$ can be positive in some graphs $G$ and how large it can be). Similarly, in Section~\ref{sec:othergames} we compare the independence game chromatic numbers with the game chromatic number. We end the paper with concluding remarks and open problems.

\section{Notation and basic results}
\label{sec:not-bas}
Throughout this paper we consider simple, undirected graphs. 
The {\em neighborhood} of a vertex $v\in V(G)$, denoted by $N_G(v)$, is the set of vertices adjacent to $v$, and the {\em degree}, $\deg_G(v)$ of $v\in V(G)$, is the cardinality of its open neighborhood. The {\em maximum degree} of vertices in $G$ is denoted by $\Delta(G)$. A graph is {\em cubic} if all its vertices have degree $3$. The graph $K_4-e$ is also called the {\em diamond}. The {\em neighborhood} of a set $S\subseteq V(G)$ is $N_G(S)=\cup_{v\in S}{N_G(v)}$. The {\em distance} between two vertices $x$ and $y$ of a connected graph $G$ is the length of a shortest path between them, and is denoted by $d_G(x,y)$. The index $G$ in the above definitions may be omitted if the graph $G$ is understood from the context. For a positive integer $n$, we write $\{1,\ldots,n\}$ shortly as $[n]$.

When Alice and Bob are playing an independence coloring game, they take turns and select (choose) a vertex of a graph $G$, which has not been chosen earlier and give it the color of the round as long as this is possible. 
In the course of the proofs, we will often say that vertices, which are neighbors of vertices that have been chosen in a given round are {\em protected}. This is usually in favour of Bob, since protecting some vertices prevents Alice to play them in the current round.

We start with the following general bound, which holds for all version of the game independence coloring numbers.

\begin{theorem}
For every graph $G$, $\chi (G)\leq \chi_{ig} (G)\leq \Delta (G)+1$.
\label{prop2}
\end{theorem}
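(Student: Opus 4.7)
The plan is to handle the two inequalities separately, noting that the argument will be essentially strategy-independent and therefore will cover all four versions of the independence game chromatic number simultaneously.

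For the lower bound $\chi(G)\le \chi_{ig}(G)$, I would simply observe that the assignment produced at the end of any play of the game is, by construction, a proper vertex coloring of $G$: all vertices receiving a common color $i$ were chosen in the $i$th round, and the rules force the set $C^i$ chosen in round $i$ to be independent. Hence the total number of colors used in any complete play is at least $\chi(G)$, and in particular this holds for the value attained under optimal play.

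For the upper bound, the key combinatorial fact I would extract from the rules is the following: if a vertex $v$ is still uncolored at the end of round $k$, then for each $i\in\{1,\dots,k\}$ the round-termination condition implies that $v$ could not properly receive color $i$, which means $v$ has at least one neighbor in $C^i$. Since the sets $C^1,\dots,C^k$ are pairwise disjoint, this yields $k$ pairwise distinct neighbors of $v$, so $k\le \deg_G(v)\le \Delta(G)$. Contrapositively, after at most $\Delta(G)+1$ rounds every vertex must have been chosen, because otherwise any surviving uncolored vertex $v$ would have at least $\Delta(G)+1$ distinct neighbors.

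The argument above does not refer to the strategies of the players at all, only to the rule that each round continues until the set of vertices chosen in that round is maximal independent in the still-uncolored subgraph. Consequently the bound $\chi_{ig}(G)\le \Delta(G)+1$ holds for every play and therefore for each of the four versions $\cigA, \cigB, \cigAB, \cigBA$, so the statement of the theorem for the unsuperscripted $\cig(G)$ follows in all cases where the four versions agree. I do not anticipate a genuine obstacle here; the only care needed is to formulate the round-termination condition precisely enough that the ``neighbor in $C^i$'' conclusion is unambiguous, and to note disjointness of the $C^i$ so that the neighbors counted are distinct.
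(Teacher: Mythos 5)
Your proposal is correct and follows essentially the same route as the paper: the lower bound comes from observing that each round's vertex set is independent, and the upper bound from the fact that a vertex left uncolored after round $i$ must have a neighbor in $C^i$ (by maximality of $C^i$ in the remaining graph), so the pairwise disjointness of the $C^i$ forces every vertex to be colored within $\deg(v)+1\le\Delta(G)+1$ rounds. The paper states this as "$v$ is isolated in $G-C^1\cup\cdots\cup C^{\deg(v)}$ and hence chosen in round $\deg(v)+1$," which is just the contrapositive form of your counting argument.
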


\begin{proof}
The first inequality is clear, since every coloring of vertices obtained in the independence coloring game is a proper coloring of $V(G)$ by the condition that in each round an independent set is built. 

Let Alice and Bob play an independence coloring game on $G$, and let $v$ be an arbitrary vertex of $G$. If after $\deg(v)$ rounds, $v$ has not yet been chosen, then all neighbors of $v$ are colored. This is because a round is not over as long as the players can choose a vertex that is not adjacent to the previously chosen vertices of that round. Hence, either $v$ has been chosen in the first $\deg(v)$ rounds, or $v$ is chosen in the round $\deg(v)+1$. Indeed, if $v$ is not chosen in the first $\deg(v)$ rounds, then $v$ is an isolated vertex in $G-C^1\cup \cdots \cup C^{\deg(v)}$, hence it must be chosen in the round $\deg(v)+1$.  Since this holds for every vertex of $G$, all vertices of $G$ will be colored in round $\Delta(G)+1$ (if the game lasts that many rounds) or earlier. Hence, $\chi_{ig} (G)\leq \Delta (G)+1$. \qed
\end{proof}

It is clear that complete graphs attain both the upper and the lower bound in Theorem~\ref{prop2}, since $\chi(K_n)=\chi_{ig}(K_n)=n=\Delta(K_n)+1$. In Section~\ref{sec:extremal}, we will present more families of graphs that attain both bounds, and will also characterize the families of graphs that attain the value $2$ for each version of the independence game chromatic number. 

In the rest of the section, we present exact values for the independence game chromatic numbers of paths and cycles. In the proofs, the vertices of $P_n$ and $C_n$ will be denoted by $v_1,\ldots, v_n$ with adjacencies defined in the natural way.

\begin{proposition}
\label{prp:_pathsA}
If $P_n$ is a path on $n$ vertices, then
\begin{displaymath}
\cigA(P_n)=\cigAB (P_n) = \left\{ \begin{array}{ll}
1, & \textrm{$n=1$,}\\
2, & \textrm{$2\leq n\leq 5$,}\\
3, & \textrm{$n\geq 6$.}
\end{array} \right.
\end{displaymath}
\label{prop_poti}
\end{proposition}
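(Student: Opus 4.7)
\medskip
\noindent\textbf{Proof plan.} I would begin by invoking Theorem~\ref{prop2}, which already gives $\cig(P_n)=1$ for $n=1$, $\cig(P_n)=2$ for $n=2$, and $2\le \cig(P_n)\le 3$ for $n\ge 3$. Hence only two things remain to be shown: (a) Alice has a strategy achieving $\cig\le 2$ for $n\in\{3,4,5\}$, and (b) Bob has a strategy forcing $\cig\ge 3$ for $n\ge 6$.

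The structural observation underlying both tasks is that on a path the game ends in exactly two rounds if and only if the round-one set $C^1$ coincides with one of the two bipartition classes of $P_n$ (the odd-indexed or the even-indexed vertices). Indeed, the game ends after two rounds iff $V(P_n)\setminus C^1$ is itself independent, and on a path the simultaneous independence of $C^1$ and of its complement forces exactly one of each adjacent pair $v_i,v_{i+1}$ to belong to $C^1$, which is the bipartition condition.

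For task (a), I would let Alice open with the middle vertex: $v_2$ when $n\in\{3,4\}$ and $v_3$ when $n=5$. A direct check shows that Bob's feasible first-round moves are then extremely restricted. In $P_3$ Bob cannot play at all; in $P_4$ his only option is $v_4$; in $P_5$ his only options are $v_1$ and $v_5$, and Alice responds by taking whichever of $v_1,v_5$ Bob did not. In each case $C^1$ is forced to equal the parity class of Alice's first move, so by the observation above the complement is independent and gets coloured entirely in round~2. For task (b), I would have Bob apply a parity-swap: after Alice opens with $v_i$ in round~1, Bob picks any $v_j$ of parity opposite to $i$ with $j\notin\{i-1,i+1\}$. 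Such a $v_j$ exists because for $n\ge 6$ each parity class has at least three vertices while at most two of them (the neighbours of $v_i$) are protected. From then on $C^1$ contains vertices of both parities and can no longer coincide with a bipartition class, so by the observation $V(P_n)\setminus C^1$ still spans an edge when round~1 ends. Consequently round~2 cannot colour every remaining vertex, forcing at least a third round, which gives $\cig(P_n)\ge 3$.

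The arguments work uniformly for $\cigA$ and $\cigAB$, because Alice opens round~1 in both versions (so both her openings in (a) and Bob's responses in (b) are valid), and in (a) the remaining vertices after round~1 form an independent set, so who starts round~2 is immaterial. The main subtlety I expect to address is the counting for Bob's parity-swap: the argument needs at least three vertices of each parity, which is why the lower bound kicks in exactly at $n=6$ (and the swap breaks down at $n=5$, consistently with the stated values). The small-case verification is otherwise routine.
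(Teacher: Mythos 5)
Your proof is correct and follows essentially the same route as the paper: Alice opens on the central vertex for $3\le n\le 5$, Bob spoils the first round for $n\ge 6$, and Theorem~\ref{prop2} supplies the outer bounds. The only difference is cosmetic — the paper has Bob play at distance exactly $3$ and points at the two adjacent protected vertices in between, whereas you let Bob play any opposite-parity non-neighbour and conclude via the clean observation that a two-round game forces $C^1$ to be a bipartition class, a slightly more general packaging of the same idea.
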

\begin{proof}
The values of $\cigA(P_n)$ and $\cigAB(P_n)$, when $n\in [3]$, are trivial. If $n=4$, then Alice has a winning strategy in the A-independence and the AB-independence coloring game on $P_n$ by choosing $v_2$ or $v_3$ in her first turn. The next move of Bob is then forced (he must select $v_4$, or $v_1$, respectively), and in the next round only two non-adjacent vertices remain, hence the game is finished after two rounds. Thus, $\cigA (P_4)\leq 2$, $\cigAB (P_4)\leq 2$, while $\cigA (P_4)>1$ and $\cigAB(P_4)>1$ is clear, since $P_4$ is not an edgeless graph. For $n=5$, the optimal first move of Alice in both games is to choose the vertex $v_3$. Consequently, the vertices $v_1$ and $v_5$ will also receive color $1$, and so the vertices $v_2$ and $v_4$ are left for round $2$, in which they will receive color $2$. Therefore, $\cigA (P_5)=2$ and $\cigAB (P_5)= 2$. In the case when $n\geq 6$, Alice cannot achieve that only two rounds will be played in an A-independence or an AB-independence coloring game on $P_n$. Indeed, regardless of which vertex she chooses in her first move, Bob will select a vertex at distance $3$ from Alice's first move. Then, the two vertices that lie between the vertices chosen in the first two moves can no longer receive color $1$ and cannot both be colored with $2$, since they are adjacent. Thus $\cigA (P_n)\geq 3$ and $\cigAB (P_n)\geq 3$. The reversed inequalities, $\cigA (P_n)\leq 3$ and $\cigAB (P_n)\leq 3$, follow by Theorem~\ref{prop2}. Hence, $\cigA (P_n)=\cigAB (P_n)=3$ for every $n\geq 6$. \qed
\end{proof}

\begin{proposition}
\label{prp:_pathsB}
If $P_n$ is a path on $n$ vertices, then
\begin{displaymath}
\cigB(P_n)=\cigBA (P_n) = \left\{ \begin{array}{ll}
1, & \textrm{$n=1$,}\\
2, & \textrm{$2\leq n\leq 6$,}\\
3, & \textrm{$n\geq 7$.}
\end{array} \right.
\end{displaymath}
\end{proposition}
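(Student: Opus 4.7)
The plan is to prove the two regimes separately, in the same spirit as Proposition~\ref{prp:_pathsA}, noting that the case $n=1$ is trivial since $P_1$ is edgeless. The key observation is that $P_n$ is a connected bipartite graph with bipartition $O=\{v_i:i\text{ odd}\}$ and $E=\{v_i:i\text{ even}\}$, and that a game on $P_n$ ends in exactly two rounds iff $C^1\in\{O,E\}$: such a $C^1$ requires both $C^1$ and $V(P_n)\setminus C^1$ to be independent, which is a proper $2$-coloring of $P_n$, and for a connected bipartite graph there are only the two canonical $2$-colorings. I call a maximal independent set \emph{balanced} if it equals $O$ or $E$.

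For $2\le n\le 6$, I would show that Alice has a strategy forcing $C^1$ to be balanced, by case analysis on Bob's opening move $v_j$. For instance, in $P_6$: if Bob opens with $v_1$, Alice replies with $v_5$, so that $v_3$ is the only legal continuation in round~$1$ and $C^1=\{v_1,v_3,v_5\}=O$; if Bob opens with $v_2$, Alice plays $v_6$, forcing $v_4$ and $C^1=\{v_2,v_4,v_6\}=E$; if Bob opens with $v_3$, Alice plays $v_5$, again forcing $v_1$ and $C^1=O$. The remaining openings in $P_6$ are handled by the symmetry $v_i\leftrightarrow v_{7-i}$, and the cases $n\in\{2,3,4,5\}$ are analogous and easier. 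The matching lower bound $\cigB(P_n)\ge 2$ is immediate since $P_n$ has an edge whenever $n\ge 2$.

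For $n\ge 7$ the upper bound $\cigB(P_n)\le 3$ is Theorem~\ref{prop2}, since $\Delta(P_n)=2$. For the lower bound I propose the following strategy for Bob. Bob opens with $v_4$. Since $v_4\in E$, a balanced $C^1$ would have to equal $E$, and Bob's objective is to force some odd-indexed vertex into $C^1$. Alice's first reply cannot be $v_3,v_4,v_5$ (they are either already taken or adjacent to $v_4$). If her reply is any other odd-indexed vertex $v_l$, then $C^1$ already contains $v_l\in O$ together with $v_4\in E$, hence is not balanced. If her reply is even, say $v_{2j}$ with $j\neq 2$, Bob plays $v_7$ when $j=1$ and $v_1$ when $j\ge 3$; in each subcase the chosen odd vertex is nonadjacent to both $v_4$ and $v_{2j}$, so the move is legal, and afterwards $C^1$ contains $v_4$ together with an odd-indexed vertex, so $C^1$ cannot be balanced no matter how round~$1$ continues. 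Thus $V(P_n)\setminus C^1$ contains an edge and the game cannot end in two rounds, giving $\cigB(P_n)\ge 3$.

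Both strategies depend only on play inside round~$1$, so they apply verbatim to the BA-game, yielding $\cigBA(P_n)=\cigB(P_n)$ with the announced values. The main obstacle, a mild one, is the routine case analysis for $2\le n\le 6$; the argument for $n\ge 7$ reduces to a single parity observation together with an appropriate opening move for Bob.
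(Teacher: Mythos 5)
Your proof is correct and follows essentially the same route as the paper: the same Alice replies (distance~$2$ or~$4$ from Bob's opening) for $2\le n\le 6$, and the same Bob strategy ($v_4$ followed by $v_1$ or $v_7$) for $n\ge 7$, with the upper bound from Theorem~\ref{prop2}. Your observation that the game ends in two rounds iff $C^1$ is one of the two bipartition classes is just a clean repackaging of the paper's ``two adjacent vertices are protected'' argument, and the verbatim transfer to the BA-game is justified since both strategies only constrain round~$1$.
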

\begin{proof}
Note that in the B-independence and the BA-independence coloring game Bob starts the game. The values of $\cigB(P_n)$ and $\cigBA(P_n)$, when $n\in [3]$, are trivial. Further, if $n\in \{4,5\}$, then Alice can ensure that just two colors will be played in a game, by responding with a vertex, which is at distance $2$ from the first vertex chosen by Bob.  This implies that either there are no more moves in round $1$ and $V(P_n)-C^1$ is an independent set, or, there is just one more move in round $1$, which is forced, and again $V(P_n)-C^1$ is an independent set.  Thus, $\cigB (P_n)\leq 2$ and $\cigBA (P_n)\leq 2$, while $\cigB (P_n)>1$ and $\cigB(P_n)>1$ is obvious. Now, consider the case when $n=6$. By symmetry, we may assume that the first move of Bob is one of the vertices in $\{v_1,v_2,v_3\}$. If Bob's first move is selecting one of the vertices $v_1$ or $v_2$, then Alice responds by choosing the vertex at distance $4$ from the vertex chosen by Bob. With this move, the vertex at distance $2$ from both vertices that have been selected is also forced to be chosen in round $1$. Thus, $V(P_n)-C^1$ is an independent set, and the game will last two rounds. In the case when Bob's first choice is the vertex $v_3$,  Alice selects the vertex $v_5$ in her next move, and then also $v_1$ is forced to be colored with $1$ in the first round. Thus, $V(P_n)-C^1$ is again an independent set, and we infer that $\cigB(P_6)=2$, and $\cigBA (P_6)=2$. 
In the case when $n\geq 7$, Alice cannot enforce that only two rounds will be played in a B-independence and a BA-independence coloring game on $P_n$. Notably, if Bob selects $v_4$ as his first move, then as his second move he can either select $v_1$ or $v_7$, by which two adjacent vertices (lying between the first two vertices selected by Bob) are protected after the first round, hence two more colors will be needed.  Thus $\cigB(P_n)\geq 3$ and $\cigBA (P_n)\geq 3$. By Theorem~\ref{prop2}, we have $\cigB(P_n)\leq 3$ and $\cigBA (P_n)\leq 3$, therefore $\cigB(P_n)=\cigBA (P_n)=3$ for every $n\geq 7$. \qed
\end{proof}

From the proof of Proposition~\ref{prp:_pathsB} one can easily infer that if $P_7$ is an induced subgraph of a graph $G$, Bob can ensure by using the strategy presented in the proof that the game will last at least three rounds. The same conclusion holds for all four versions of the independence coloring game.

\begin{corollary}
\label{cor:paths}
If $G$ is a graph, which contains a path $P_7$ as an induced subgraph, then $\cig(G)\ge 3$. 
\end{corollary}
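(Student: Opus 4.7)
The plan is to transfer Bob's strategy from Proposition~\ref{prp:_pathsB} to the larger graph $G$. Label the vertices of an induced copy of $P_7$ in $G$ as $u_1, u_2, \ldots, u_7$, with edges $u_iu_{i+1}$ for $i = 1,\ldots,6$ and no further edges among these seven vertices. I will show that Bob has a strategy forcing at least three rounds in each of the four versions of the game; then $\cig(G) \ge 3$ follows.

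In the B- and BA-versions (where Bob starts) Bob plays $u_4$ as his first move. Since $u_4$ is non-adjacent in $G$ to each of $u_1, u_7$ (because the copy of $P_7$ is induced), Bob retains the option of later playing $u_1$ or $u_7$. His round-1 rule is: on his next turn, play $u_1$ if it is still available (uncolored and with no color-1 neighbor), and otherwise play $u_7$. The key point is that if both $u_4$ and at least one of $u_1, u_7$ receive color $1$ in round $1$, then the corresponding pair $(u_2, u_3)$ or $(u_5, u_6)$ is doubly protected: each of the two vertices has a color-1 neighbor in $G$ and thus must receive a color from $\{2,3,\ldots\}$, and since the two vertices are adjacent, they require two distinct such colors, forcing a total of at least three rounds.

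For the A- and AB-versions (where Alice starts) Bob responds to Alice's opening move by playing $u_4$ (or a suitable symmetric alternative in the unlikely case that $u_4$ is adjacent to Alice's first move) and then follows the same ``$u_1$ or $u_7$'' rule on his next turn. Because the induced structure of $P_7$ is preserved, the same protective-pair argument applies, so again we obtain one of the uncolored adjacent pairs $(u_2, u_3)$ or $(u_5, u_6)$ at the end of round~$1$.

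The main subtlety---and the step I expect to be the technical obstacle---is that Alice may play a vertex outside the induced $P_7$ that interferes with Bob's plan (for instance a vertex adjacent to both $u_1$ and $u_7$, which blocks Bob's canonical second move). In every such scenario, however, the protection Alice introduces (say, preventing $u_1$ from being colored $1$) already contributes to the double-protection of $(u_2, u_3)$; a symmetric observation handles $(u_5, u_6)$. A case analysis parallel to the one in Proposition~\ref{prp:_pathsB}, running over Alice's first move and its adjacencies to $\{u_1, u_2, u_6, u_7\}$, confirms that in every branch at least one of the pairs $(u_2, u_3)$ or $(u_5, u_6)$ remains uncolored after round~$1$. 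With this pair in hand, the conclusion $\cig(G) \ge 3$ is immediate.
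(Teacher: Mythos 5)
Your overall plan is the same as the paper's (the paper justifies this corollary with a single sentence transferring Bob's ``play $v_4$, then $v_1$ or $v_7$'' strategy from Proposition~\ref{prp:_pathsB}), and your protective-pair argument is sound \emph{when it applies}: if $u_4$ and one of $u_1,u_7$ both receive color $1$, the two path-vertices between them are adjacent, cannot receive color $1$, and hence force colors $2$ and $3$. The genuine gap is in your final paragraph. You assert that when Alice blocks both $u_1$ and $u_7$ (say by playing a common neighbor $w$ of $u_1$ and $u_7$ outside the path), ``the protection Alice introduces already contributes to the double-protection of $(u_2,u_3)$.'' It does not: protecting $u_1$ means $u_1$ cannot get color $1$, whereas protecting $u_2$ requires $u_2$ to \emph{have a color-$1$ neighbor}. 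After Alice plays $w$, both path-neighbors of $u_2$, namely $u_1$ and $u_3$, are excluded from $C^1$; if $u_2$ has no further neighbors it can never become protected, and by maximality of $C^1$ it is \emph{forced to receive color $1$}, as is $u_6$. Then $C^1$ meets the path exactly in $\{u_2,u_4,u_6\}$ and no edge of the path survives into $G-C^1$, so your concluding claim that ``in every branch at least one of the pairs $(u_2,u_3)$ or $(u_5,u_6)$ remains uncolored'' is exactly what fails.

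Moreover, this is not a repairable technicality of your strategy: take $G=C_8$ with vertices $u_1,\ldots,u_7,w$ in cyclic order, which contains $u_1\cdots u_7$ as an induced $P_7$. In the B- or BA-game, whatever vertex Bob opens with, Alice answers with the antipodal vertex; the two remaining unprotected vertices form the other antipodal pair, are nonadjacent, and are forced into $C^1$, so $C^1$ is a bipartition class, $G-C^1$ is independent, and the game ends after two rounds regardless of Bob's play. Hence $\cigB(C_8)=\cigBA(C_8)=2$, so the corollary as stated (``for all four versions'') is in fact false, and the same example contradicts Proposition~\ref{prop:cikliB} for $n=8$. Any correct version of this corollary needs additional hypotheses controlling how vertices outside the induced $P_7$ attach to it (in particular, forbidding a common neighbor of $u_1$ and $u_7$), so the deferred case analysis at the end of your argument cannot be completed as written.
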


Next, we consider cycles $C_n$. Clearly, $\cig(C_3)=3$, and $\cig(C_4)=2$ (holds for any type of the independence coloring game). If $n=5$, the first round has two moves in any of the games, and two adjacent vertices are protected after the first round. This yields $\cig(C_5)\geq 3$, and combining with Theorem~\ref{prop2} we get $\cig(C_5)=3$. There is a distinction between different versions of the games in the cycle $C_6$, and so two separate formulas are needed. 

\begin{proposition}
\label{prop:cikliA}
If $C_n$ is the cycle of length $n\geq 3$, then
\begin{displaymath}
\cigA (C_n) =\cigAB (C_n) = \left\{ \begin{array}{ll}
2, & \textrm{$n=4$,}\\
3, & \textrm{otherwise.}
\end{array} \right.
\end{displaymath}
\end{proposition}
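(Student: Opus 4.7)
The plan is to dispose of each value of $n$ separately, noting that the upper bound $\cigA(C_n), \cigAB(C_n) \le 3$ comes for free from Theorem~\ref{prop2} since $\Delta(C_n)=2$. So I only need to argue the lower bound $\ge 3$ for $n\ne 4$, and the upper bound $\le 2$ for $n=4$. Three cases are routine: for $n=3$ the value is just $\chi(K_3)=3$; for $n=4$ Alice plays any $v_1$, Bob is then forced to the unique non-neighbor $v_3$, and the independent pair $\{v_2,v_4\}$ is colored in round~$2$; for $n=5$ the maximum independent set of $C_5$ has size~$2$, so round~$1$ always selects exactly two non-adjacent vertices and the remaining three vertices induce a subgraph containing an edge, forcing $\ge 3$ rounds regardless of play. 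For $n\ge 8$ the cycle $C_n$ contains $P_7$ as an induced subgraph, and Corollary~\ref{cor:paths} immediately yields $\cig(C_n)\ge 3$.

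The substantive case is $n=6$. By the vertex-transitivity of $C_6$, I may assume Alice's first move is $v_1$. The strategy I would prescribe to Bob is to respond with the antipodal vertex $v_4$: the set $\{v_1,v_4\}$ is a maximal independent set of $C_6$ (every other vertex is adjacent to one of them), so round~$1$ ends, and the remaining four vertices induce the perfect matching $v_2v_3\cup v_5v_6$, which cannot be properly colored with a single additional color. I would also verify that Bob's two alternative legal responses ($v_3$ and $v_5$) are strictly worse for him: in either case Alice can complete the size-$3$ maximal independent set $\{v_1,v_3,v_5\}$ in round~$1$, after which the remaining three vertices $\{v_2,v_4,v_6\}$ are independent and the game ends after $2$ rounds. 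The distinction between the A- and the AB-version is irrelevant here because Bob ends round~$1$ in either version, so Alice starts round~$2$ in both.

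For $n=7$ the argument is structural rather than strategic: every maximal independent set in $C_7$ has size exactly $3$ (given any independent pair, a direct case check shows that a third non-neighbor always exists), and removing such a set deletes $3\cdot 2=6$ distinct edges of $C_7$ (an edge would be counted twice only if both its endpoints were in the set, contradicting independence). Hence exactly one of the seven edges of $C_7$ survives into round~$2$, and that edge alone forces $\ge 3$ rounds regardless of play. The main obstacle is thus the $n=6$ case: unlike the others it truly requires a strategic argument in which one must exhibit Bob's response, verify it is legal, and check that no alternative response would allow him to do better; the remaining cases reduce to counting or to direct appeals to Theorem~\ref{prop2} and Corollary~\ref{cor:paths}.
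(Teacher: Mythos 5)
Your proof is correct, and for the one case you rightly identify as substantive ($n=6$) it coincides with the paper's strategy: Bob answers Alice's first move with the vertex at distance $3$ (the antipode), ending round $1$ on a maximal independent set of size $2$ and leaving a perfect matching that needs two more colors. Where you diverge is in the treatment of $n\ge 7$. The paper runs the same distance-$3$ strategy uniformly for every $n\ge 6$: after Bob's reply, the two vertices lying between Alice's and Bob's choices are protected and mutually adjacent, so they must receive two distinct colors other than $1$, forcing three rounds; this single argument covers $n=6,7$ and all larger $n$ at once. You instead split off $n=7$ with a structural count (every maximal independent set of $C_7$ has size exactly $3$ and kills exactly $6$ of the $7$ edges, so one edge always survives round $1$) and dispatch $n\ge 8$ by spotting an induced $P_7$ and invoking Corollary~\ref{cor:paths}. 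Both routes are sound. The paper's is shorter and uniform; your structural arguments for $n=5$ and $n=7$ have the merit of being strategy-independent, showing the lower bound holds no matter how either player plays, whereas the distance-$3$ argument only exhibits one good strategy for Bob (which is all that is needed for a lower bound). Your extra verification that Bob's alternative replies in $C_6$ are worse for him is not needed for the bound but does no harm.
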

\begin{proof}
By the above observations, we need to consider the case when $n\geq 6$. In both versions of the game, Alice has the first move. The response of Bob in his first move is to select a vertex at distance $3$ from the vertex chosen by Alice. Following similar arguments as in the proof for paths $P_n$ when $n\geq 6$, we infer $\cigA(C_n)=\cigAB(C_n)=3$ for any $n\geq 6$. \qed
\end{proof}

\begin{proposition}
\label{prop:cikliB}
If $C_n$ is a cycle of length $n\geq 3$, then
\begin{displaymath}
\cigB (C_n) =\cigBA (C_n) = \left\{ \begin{array}{ll}
2, & \textrm{$n=4,6$,}\\
3, & \textrm{otherwise.}
\end{array} \right.
\end{displaymath}
\end{proposition}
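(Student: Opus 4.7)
My plan is to split the argument by $n$, reusing the preliminary discussion (which already settles $n\in\{3,4,5\}$ for every version of the game) and invoking Theorem~\ref{prop2} to obtain the upper bound $\cig(C_n)\le\Delta(C_n)+1=3$ in all remaining cases. This reduces the task to proving $\cigB(C_n)=\cigBA(C_n)=2$ when $n=6$ and $\cigB(C_n)=\cigBA(C_n)\ge 3$ when $n\ge 7$.

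For $n=6$, I would give an explicit strategy for Alice. By the symmetry of $C_6$ we may assume that Bob opens with $v_1$; Alice responds with $v_3$. The only remaining unprotected vertex is then $v_5$, so Bob is forced to complete round~$1$ by selecting it. Consequently $C^1=\{v_1,v_3,v_5\}$ and the remaining set $\{v_2,v_4,v_6\}$ is independent, so round~$2$ colors all of it and the game terminates after two rounds. Since the remainder forms an independent set, it is irrelevant who begins round~$2$, so the bound $\cig(C_6)\le 2$ holds for both the B- and the BA-version; combined with the trivial lower bound $\cig(C_6)\ge\chi(C_6)=2$ this gives equality.

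For $n\ge 8$ the lower bound is immediate: any seven consecutive vertices of $C_n$ induce a copy of $P_7$ (the closing edge of the cycle is not present among them), so Corollary~\ref{cor:paths} yields $\cigB(C_n),\cigBA(C_n)\ge 3$.

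The main obstacle is the remaining case $n=7$, since $C_7$ does not contain $P_7$ as an induced subgraph and Corollary~\ref{cor:paths} is not available. Here I would exhibit a strategy for Bob: he opens with $v_1$. Alice's available responses are $v_3,v_4,v_5,v_6$, and under the reflection of $C_7$ that fixes $v_1$ it suffices to treat the cases $v_3$ and $v_4$. A short inspection shows that in either case round~$1$ closes with three vertices (Bob can play $v_5$ after Alice's $v_3$, while $v_6$ is forced after Alice's $v_4$) and that the four remaining vertices induce a graph with exactly one edge (either $v_6v_7$ or $v_2v_3$). Hence at least two further rounds are needed, giving a total of at least three; together with Theorem~\ref{prop2} this establishes $\cigB(C_7)=\cigBA(C_7)=3$ and completes the proof.
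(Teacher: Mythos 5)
Your proof is correct and follows essentially the same route as the paper: Alice's distance-$2$ response on $C_6$, Theorem~\ref{prop2} for the upper bound, and Bob forcing two adjacent protected vertices to survive round~$1$ for the lower bound when $n\ge 7$. Your explicit treatment of $C_7$ (where Corollary~\ref{cor:paths} is unavailable since $C_7$ has no induced $P_7$) is a welcome bit of extra care that the paper glosses over with ``the proof goes along a similar way,'' and your case analysis there checks out.
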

\begin{proof}
First, consider the case when $n=6$. Bob starts a game on $C_6$, and Alice selects in her first move a vertex at distance $2$ from the vertex chosen by Bob. In this way, the vertex at distance $2$ from both chosen vertices is forced to be colored in the first round. It follows that $\cigB(C_6)=\cigBA(C_6)=2$. Finally, when $n\geq 7$, the proof goes along a similar way as in Proposition~\ref{prp:_pathsB}, reaching $\cigB(C_n)=\cigBA(C_n)=3$. \qed
\end{proof}

\section{Extremal families}
\label{sec:extremal}

We start this section by characterizing the graphs that attain value $2$ in different versions of the independence coloring games. Note that $\cig(G)=1$ if and only if $G$ is edgeless, hence this is the smallest non-trivial case to be considered.

\begin{theorem} \label{izrek_dva}
If $G$ is a connected graph with at least one edge,
then the following statements are equivalent:
\begin{enumerate}[(1)]
\item $\cigA(G) =2$;
\item $\cigAB(G)=2$;
\item $G$ is a bipartite graph with the bipartition $V(G)=(X_1,X_2)$,
and there exists an $i\in\{1,2\}$ and a vertex $x$ in $X_i$, which is adjacent to all vertices from $X_j$, where $\{i,j\}=\{1,2\}$. \end{enumerate}
\end{theorem}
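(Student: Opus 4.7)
The plan is to cycle through the three conditions as (3) $\Rightarrow$ (1), (3) $\Rightarrow$ (2), and (1),\,(2) $\Rightarrow$ (3), so that the equivalence (1) $\Leftrightarrow$ (2) falls out for free.

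For (3) $\Rightarrow$ (1) and (3) $\Rightarrow$ (2), I would have Alice open both games (she moves first in each) by selecting the universal vertex $x \in X_i$. Since $x$ is adjacent to every vertex of $X_j$, no vertex of $X_j$ can ever be added to $C^1$ during round~1, so $C^1 \subseteq X_i$. On the other hand $X_i$ is independent, so no vertex of $X_i$ is ever blocked during round~1, and the round cannot end while some vertex of $X_i \setminus C^1$ is still available. Consequently $C^1 = X_i$ irrespective of Bob's play. Round~2 then acts on $V(G) \setminus C^1 = X_j$, which is independent, so one further color suffices and we get $\cigA(G) = \cigAB(G) = 2$.

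For the converse I would argue by contrapositive: assume (3) fails, i.e.\ no vertex in $X_1$ is adjacent to all of $X_2$ and vice versa. I first note that whenever $\cigA(G) = 2$ or $\cigAB(G) = 2$, the graph $G$ must be bipartite, since the resulting proper 2-coloring splits $V(G)$ into the two independent sets $C^1$ and $V(G) \setminus C^1$. Because $G$ is connected, its bipartition is unique up to swap, so any partition of $V(G)$ into two independent sets must coincide with $\{X_1, X_2\}$. Thus, in order for the game to end in round~2, Alice must force $C^1 \in \{X_1, X_2\}$.

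Bob's counter-strategy is then essentially forced. Whatever vertex $a \in X_k$ Alice selects first, the failure of (3) supplies some $b \in X_{3-k}$ with $ab \notin E(G)$, and Bob plays $b$. After this single exchange $C^1$ already contains vertices from both parts, so $C^1 \notin \{X_1, X_2\}$ holds at the end of round~1 regardless of the remaining play, and by the uniqueness of bipartition $V(G) \setminus C^1$ cannot be independent. Hence at least a third round is needed, giving $\cigA(G) \geq 3$ and $\cigAB(G) \geq 3$. The only real obstacle is the conceptual one at the start of this argument, namely invoking the uniqueness of bipartition of a connected bipartite graph to collapse Alice's task to ``force $C^1$ to equal a specific side''; once this is in place, Bob's one-move response handles the rest by pigeonhole.
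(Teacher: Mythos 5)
Your proof is correct, and the backward direction ((3) implies (1) and (2), via Alice opening with the dominating vertex $x$) is identical to the paper's. The forward direction, however, takes a genuinely different route. The paper argues locally: if Alice's first vertex $x\in X_1$ misses some vertex of $X_2$, then connectivity yields a vertex $y\in X_2$ at distance exactly $3$ from $x$; Bob plays $y$, and the two adjacent internal vertices of a shortest $x$--$y$ path are both protected, so they require two further colors. You argue globally: the game ends in two rounds if and only if $V(G)\setminus C^1$ is independent, i.e.\ if and only if $\{C^1, V(G)\setminus C^1\}$ is a partition into two independent sets, which by uniqueness of the bipartition of a connected bipartite graph forces $C^1\in\{X_1,X_2\}$; Bob defeats this with a single move, playing a non-neighbor $b$ of Alice's first vertex $a$ on the opposite side (such $b$ exists precisely because (3) fails, and the move is legal since $\{a,b\}$ is independent, so round $1$ has not yet ended). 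Your version is arguably cleaner: it avoids constructing the distance-$3$ vertex and the reasoning about two protected adjacent vertices, replacing them with one structural fact about connected bipartite graphs. The paper's local argument has the minor advantage of being reusable verbatim in the analogous Theorem on the B- and BA-games, where the same distance-$3$ trick reappears, but as a proof of this particular statement both are complete and correct.
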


\begin{proof}
First, we prove that if $G$ is a graph that obeys (1) or (2), then (3) holds.
Since $G$ has edges, $\cigA(G)=2$ (respectively, $\cigAB(G)=2$) implies $\chi (G)=2$, and so $G$ is a bipartite graph. Let $(X_1,X_2)$ be the bipartition of $V(G)$.
Since in both, an A-independence and an AB-independence coloring game, Alice starts, we may assume that (any) one of these games is played, and that Alice selected a vertex $x\in X_1$. Suppose that $x$ is not adjacent to all vertices of $X_2$. Then, since $G$ is connected, there exists a vertex $y\in X_2$, which is at distance $3$ from $x$. If Bob chooses $y$ as his second move, then two vertices on a shortest path from $x$ to $y$ are both protected, and are adjacent. Hence, these two vertices will receive different colors, which are also different from $1$, a contradiction with the assumption that the ($A$- or $AB$-) independence game chromatic number is $2$. Hence, $x$ must be adjacent to all vertices of $X_2$, and so the statement (3) is true.

Now, assume that (3) holds, and let $x\in X_1$ be a vertex adjacent to all vertices of $X_2$.  Consider a game in which Alice has the first move. Alice starts the game by choosing $x$, which is adjacent to all vertices of $X_2$, hence no vertex from $X_2$ can be colored by color $1$. In addition, all vertices from $X_1$ will be chosen in the first round. In the second round, only the vertices of $X_2$ remain uncolored, and since they form an independent set, they will all be chosen in round $2$ regardless of who starts the round. Thus, $\cigA(G)=2$ and $\cigAB(G)=2$, which are the statements (1) and (2), respectively. \qed
\end{proof}

\begin{theorem} \label{izrek_dvaB}
If $G$ is a connected graph with at least one edge,
then the following statements are equivalent:
\begin{enumerate}[(1)]
\item $\cigB(G) =2$;
\item $\cigBA(G)=2$;
\item $G$ is a bipartite graph with the bipartition $V(G)=(X_1,X_2)$
such that for each $i\in\{1,2\}$ and for every vertex $x$ in $X_i$ there exist a vertex $y\in X_i$, such that $N(x)\cup N(y)=X_j$, where $\{i,j\}=\{1,2\}$. \end{enumerate}
\end{theorem}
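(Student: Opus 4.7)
The plan is organized around a single structural observation. In a connected bipartite graph $G$ with bipartition $(X_1,X_2)$, a maximal independent set $I$ of $G$ satisfies that $V(G)\setminus I$ is also independent if and only if $I\in\{X_1,X_2\}$. Indeed, if $I$ and $V(G)\setminus I$ are both independent, then $(I,V(G)\setminus I)$ is a bipartition of $G$, and in a connected bipartite graph the bipartition is unique up to swapping the two parts. Since the game ends in exactly two rounds precisely when $V(G)\setminus C^1$ is independent, this observation reduces Alice's goal to forcing $C^1\in\{X_1,X_2\}$.

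For the implication (3)$\Rightarrow$(1),(2), I would describe Alice's strategy directly. Suppose Bob opens the game with some $x\in X_i$. By (3) there exists $y\in X_i$ with $N(x)\cup N(y)=X_j$, and Alice plays such a $y$ as her response. Every vertex of $X_j$ is then adjacent to $x$ or to $y$, hence no vertex of $X_j$ can be added to $C^1$ during round~$1$. Any subsequent move in round~$1$ must come from $X_i$, and since $X_i$ is independent, no unchosen vertex of $X_i$ is ever blocked; consequently the round ends exactly when $C^1=X_i$. In round~$2$, the remaining set $V(G)\setminus C^1=X_j$ is independent and is therefore fully colored by $2$, regardless of which player starts that round. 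This yields $\cigB(G)\le 2$ and $\cigBA(G)\le 2$; the matching lower bound $\ge 2$ is immediate because $G$ has an edge.

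For the reverse implications (1)$\Rightarrow$(3) and (2)$\Rightarrow$(3), whose arguments are identical, assume $\cigB(G)=2$. Then $\chi(G)\le 2$ by Theorem~\ref{prop2}, so $G$ is bipartite; let $(X_1,X_2)$ be its bipartition. Fix an arbitrary vertex $x\in X_i$ and consider the game in which Bob opens with $x$; let $y$ be Alice's response according to any strategy that guarantees two rounds. By the structural observation, the resulting $C^1$ lies in $\{X_1,X_2\}$, and since $x\in C^1\cap X_i$ we must have $C^1=X_i$; in particular $y\in X_i$. It remains to show $N(x)\cup N(y)=X_j$. If not, pick $z\in X_j\setminus(N(x)\cup N(y))$; then $z$ is non-adjacent to both $x$ and $y$, so Bob may legally play $z$ as his next move, forcing $z\in C^1$ and contradicting $C^1=X_i$. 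Hence $N(x)\cup N(y)=X_j$, which is precisely (3).

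The main delicate point is in the converse direction: a priori one might worry that Alice could rely on a cleverer multi-round strategy, or ``trade'' an unwanted vertex of $X_j$ in $C^1$ for some later benefit. The structural observation is exactly what rules this out, since it pins the only admissible final values of $C^1$ to $X_1$ and $X_2$. Once this is in hand, the reverse direction collapses to the two-move analysis above: Alice's single response $y$ to Bob's opening $x$ must already dominate all of $X_j$, or else Bob's very next move ruins the game.
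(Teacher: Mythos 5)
Your proposal is correct, and its overall skeleton matches the paper's: bipartiteness follows from $\chi(G)\le 2$, and in the direction (3)$\Rightarrow$(1),(2) Alice answers Bob's opening vertex $x\in X_i$ with a partner $y\in X_i$ satisfying $N(x)\cup N(y)=X_j$, after which round $1$ must end with $C^1=X_i$ and round $2$ colors the independent set $X_j$. The difference lies in how the converse is closed. The paper argues locally: it first rules out Alice replying in $X_j$ (via an odd-distance argument), and then shows that if $N(x)\cup N(y)\ne X_j$ Bob can produce two color-$1$ vertices at distance $3$, whose two middle vertices are adjacent and both protected, forcing at least three rounds. You instead isolate the structural lemma that, in a connected bipartite graph, a maximal independent set whose complement is also independent must be one of the two parts; hence the game lasts exactly two rounds iff $C^1\in\{X_1,X_2\}$. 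This single fact simultaneously forces Alice's reply into $X_i$ and turns the converse into a one-move contradiction (Bob legally plays $z\in X_j\setminus(N(x)\cup N(y))$, so $C^1$ meets both parts). Your route is shorter and avoids the paper's separate sub-argument about replies in $X_j$; the paper's distance-$3$ mechanism is more hands-on but generalizes to non-bipartite settings where the unique-bipartition lemma is unavailable. One degenerate case deserves a sentence in your write-up: if Bob's opening vertex $x$ already dominates $V(G)\setminus\{x\}$ (e.g.\ the center of a star), round $1$ can end before Alice moves, so ``Alice's response $y$'' need not exist; there $X_i=\{x\}$ and $N(x)=X_j$, so condition (3) holds with $y=x$ and nothing is lost.
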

\begin{proof}
First, we prove that if $G$ is a graph that obeys (1) or (2), then (3) holds.
Since $G$ has edges, $\cigB(G)=2$ (respectively, $\cigBA(G)=2$) implies $\chi (G)=2$, and so $G$ is a bipartite graph. Let $(X_1,X_2)$ be the bipartition of $V(G)$. For the purpose of getting a contradiction, assume that (3) does not hold. Since a game is played in which Bob starts, let the first move of Bob be choosing a vertex $x\in X_i$. Note that the response of Alice is not in $X_j$, where $j\ne i$. Indeed, if Alice chose $z\in X_j$, then the distance between $x$ and $z$ is odd (and bigger than $1$, since $x$ and $z$ are not adjacent). Hence, even if $d(x,y)>3$, Bob can achieve in the next move that there will be two vertices colored by $1$, which are at distance $3$. This immediately implies that the number of rounds will be greater than $2$, which is a contradiction. Thus, Alice's response to the first move of Bob is to choose a vertex $y$, which is in $X_i$. By the assumption, $N(x)\cup N(y)\ne X_j$, hence, since $G$ is connected, there exists a vertex $z\in X_j$, which is at distance $3$ from $x$ or from $y$. The same argument as above gives a contradiction. We infer that (3) holds, that is, for each set $X_i$ of the bipartition and for every vertex $x$ of $X_i$ there is a vertex $y\in X_i$ such that all vertices in $X_j$, where $j\ne i$, are adjacent to $x$ or $y$.

The other direction, that (3) implies (1) and (2), can be proved by essentially the same arguments as above. Note that condition (3) implies that for any first move of Bob there exists a response of Alice, by which all vertices of one of the sets $X_i$ are protected, and all vertices of the other set are forced to be given color $1$. Thus, in round $2$ only an independent set remains uncolored, and regardless of who starts round $2$, this means that with this round the game finishes. Hence, (1) and (2) readily follow. \qed
\end{proof}

We continue by presenting a large family of cubic graphs, which attain the upper bound in Theorem~\ref{prop2}.

Let $H$ be an arbitrary graph in which there are two non-adjacent vertices, $v_1$ and $v_{14}$, with $\deg(v_1)=\deg(v_{14})=2$, and all other vertices of $H$ have degree $3$. The graph $G_H$ is obtained from $H$ by adding another $12$ vertices $v_2,\ldots,v_{13}$ such that $v_2,v_3,v_4$ and $v_5$ induce the diamond with $v_2$ and $v_5$ non-adjacent, $v_6,v_7,v_8$ and $v_9$ induce the diamond with $v_6$ and $v_9$ non-adjacent, $v_{10},v_{11},v_{12}$ and $v_{13}$ induce the diamond with $v_{10}$ and $v_{13}$ non-adjacent, and there are also edges $v_1v_2,v_5v_6,v_9v_{10}$ and $v_{13}v_{14}$. See Fig.~\ref{cubic}. Clearly, all graphs in the family ${\cal G}=\{G_H\,|\,H\textit{ has two vertices of degree 2, and all other vertices have degree 3}\}$ are cubic graphs.

\begin{figure}[h]
\begin{center}
\begin{tikzpicture}
\def\vr{3pt}
\def\len{1}

\coordinate(v8) at (0,1);
\coordinate (v7) at (0,3);
\coordinate (v6) at (-1,2);
\coordinate (v9) at (1,2);

\coordinate(v5) at (-3,0);
\coordinate (v3) at (-4,-1);
\coordinate (v4) at (-2,-1);
\coordinate (v2) at (-3,-2);
\coordinate (v1) at (-1,-4);

\coordinate(v10) at (3,0);
\coordinate (v11) at (4,-1);
\coordinate (v12) at (2,-1);
\coordinate (v13) at (3,-2);
\coordinate (v14) at (1,-4);
\draw(0,-4)node{$H$};

\draw(v1)[fill=black] circle(\vr);
\draw(v2)[fill=black] circle(\vr);
\draw(v3)[fill=black] circle(\vr);
\draw(v4)[fill=black] circle(\vr);
\draw(v5)[fill=black] circle(\vr);
\draw(v6)[fill=black] circle(\vr);
\draw(v7)[fill=black] circle(\vr);
\draw(v8)[fill=black] circle(\vr);
\draw(v9)[fill=black] circle(\vr);
\draw(v10)[fill=black] circle(\vr);
\draw(v11)[fill=black] circle(\vr);
\draw(v12)[fill=black] circle(\vr);
\draw(v13)[fill=black] circle(\vr);
\draw(v14)[fill=black] circle(\vr);

\draw (v1) -- (v2);
\draw (v2) -- (v3);
\draw (v2) -- (v4);
\draw (v3) -- (v4);
\draw (v4) -- (v5);
\draw (v3) -- (v5);
\draw (v5) -- (v6);
\draw (v6) -- (v7);
\draw (v6) -- (v8);
\draw (v7) -- (v8);
\draw (v7) -- (v9);
\draw (v8) -- (v9);
\draw (v9) -- (v10);
\draw (v10) -- (v11);
\draw (v10) -- (v12);
\draw (v11) -- (v12);
\draw (v11) -- (v13);
\draw (v12) -- (v13);
\draw (v13) -- (v14);

\draw(v1)node[left]{\footnotesize{$v_1$}};
\draw(v2)node[left]{\footnotesize{$v_2$}};
\draw(v3)node[left]{\footnotesize{$v_3$}};
\draw(v4)node[right]{\footnotesize{$v_4$}};
\draw(v5)node[left]{\footnotesize{$v_5$}};
\draw(v6)node[left]{\footnotesize{$v_6$}};
\draw(v7)node[above]{\footnotesize{$v_7$}};
\draw(v8)node[below]{\footnotesize{$v_8$}};
\draw(v9)node[right]{\footnotesize{$v_9$}};
\draw(v10)node[right]{\footnotesize{$v_{10}$}};
\draw(v11)node[right]{\footnotesize{$v_{11}$}};
\draw(v12)node[left]{\footnotesize{$v_{12}$}};
\draw(v13)node[right]{\footnotesize{$v_{13}$}};
\draw(v14)node[right]{\footnotesize{$v_{14}$}};
\draw[dashed] (0,-4) circle (1cm);
\end{tikzpicture}
\end{center}
\caption{A cubic graph $G_H$ with $\cig(G_H)=4$.} 
\label{cubic}
\end{figure}
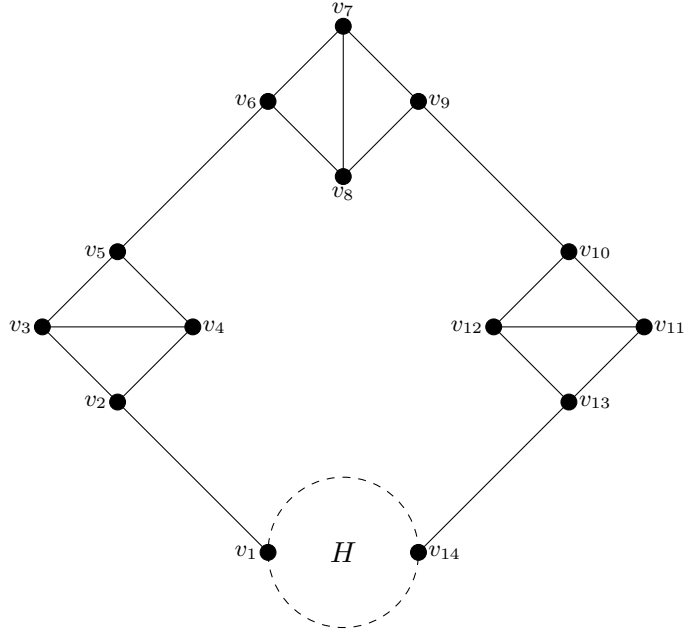

\begin{proposition}
\label{prp:cubic}
For every graph $G_H$ from the family $\mathcal{G}$ (all graphs of which are cubic), we have $\chi_{ig}(G_H)=4$.
\end{proposition}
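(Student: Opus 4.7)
The upper bound $\cig(G_H)\le 4$ is immediate from Theorem~\ref{prop2}, since $G_H$ is cubic; the content of the proposition is the matching lower bound $\cig(G_H)\ge 4$ in each of the four versions of the game.

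The plan rests on the following rigidity observation. In each of the three diamonds $D_1=\{v_2,v_3,v_4,v_5\}$, $D_2=\{v_6,v_7,v_8,v_9\}$, $D_3=\{v_{10},v_{11},v_{12},v_{13}\}$, any proper $3$-coloring assigns the same color to the two non-adjacent poles and the remaining two colors to the two internal vertices. Therefore, if at the end of the game the poles of some $D_i$ have been placed in two different rounds, the four vertices of that diamond use four distinct colors, and the game lasts at least four rounds. So it suffices to describe a strategy for Bob that forces the pole-pair of at least one diamond into two different rounds.

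Bob's strategy is a response rule aimed at pole moves. Whenever Alice plays a pole $p$ of some $D_i$ in the current round $r$, Bob replies in the same round by either (a) placing the unique external neighbor of the opposite pole $p'$ of $D_i$ (so $p'\notin C^r$ while $p\in C^r$, breaking $D_i$), or (b) when the vertex needed for option~(a) has already been colored, by placing the pole of a neighboring diamond whose partner is already protected by $p$ itself; concretely, Alice's $v_5$ protects $v_6$ so Bob plays $v_9$, Alice's $v_6$ protects $v_5$ so Bob plays $v_2$, and analogously on the $v_9$--$v_{10}$ side. When Alice's move is not a pole, Bob plays a preemptive vertex---a common neighbor $v_7$ or $v_8$ to protect the poles of $D_2$ from entering $C^r$, or one of $v_1,v_{14}$ to set up a later break of $D_1$ or $D_3$.

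The main obstacle is to prove that Bob's response (a) or (b) is always executable. The hardest case is when Alice spends all of round~$1$ on carefully chosen non-pole vertices (say $v_3,v_{11},v_1,v_{14}$) so as to leave every diamond dormant and to prematurely color as many external helpers as possible; one then has to check that Bob's round-$2$ reply to Alice's first pole move still has either (a) or (b) available, which I expect to follow from the fact that the edges $v_5v_6$ and $v_9v_{10}$ link the three diamonds tightly enough that Alice cannot simultaneously neutralize all of Bob's response options and still preserve a valid $3$-coloring (for instance, any attempt to put both $\{v_2,v_5\}$ and $\{v_{10},v_{13}\}$ into the same round forces $\{v_6,v_9\}$ out of that round, and a preemptive $v_8\in C^2$ of Bob then prevents $\{v_6,v_9\}\subseteq C^2$). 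A separate but analogous check for each of the four versions (A, AB, B, BA) of the game, keeping track of who opens each round, completes the argument.
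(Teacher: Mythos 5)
Your rigidity observation is correct and attractive: in any proper coloring of a diamond the two poles either share a color or the diamond consumes four colors, so it would indeed suffice for Bob to split one pole pair across two rounds. This is a genuinely different route from the paper, which instead has Bob answer Alice's first move $v_k$ with $v_{k\pm 4}$ so that after round one either a triangle or an induced $P_7$ is left uncolored, and then invokes Corollary~\ref{cor:paths}; that argument localises all the work in round one, whereas yours must track the poles across the whole game.

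The problem is that the heart of your argument is missing. The entire difficulty of the proposition is to show that Bob's splitting move is \emph{always legal}, and you explicitly defer this (``which I expect to follow from\dots''). Several concrete issues are left open. (i) Your responses (a) and (b) can both be unavailable: if Alice plays pole $v_5$ in round $r$ after $v_1$ has been coloured in an earlier round, option (a) fails, and option (b) requires $v_9$ to be uncoloured and non-adjacent to $C^r$ --- but $v_7$, $v_8$ or $v_{10}$ may already lie in $C^r$, or $v_9$ may already carry the colour that $v_6$ is still free to receive later. (ii) Players cannot pass, so Bob may be \emph{forced} to complete a pole pair himself when the partner pole is his only legal move in the current round; your strategy never rules this out. (iii) The ``preemptive'' rule for non-pole moves is underspecified (e.g.\ after Alice opens with $v_8$, both $v_7$ and $v_8$ are unusable), and the adaptation to the B- and BA-games, where Bob opens rounds and hence moves \emph{before} seeing Alice's pole move, is only asserted. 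Until the response rule is shown to be executable against every line of play in each of the four variants, the lower bound $\cig(G_H)\ge 4$ is not established. If you want to salvage the diamond idea with less bookkeeping, note that splitting a pole pair can be arranged already in round one by a single well-chosen reply (this is in effect what the paper's $v_{k\pm4}$ response achieves), which avoids having to control all later rounds.
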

\begin{proof}
Let Alice and Bob play an independence coloring game on $G_H\in \mathcal{G}$ (see Fig.~\ref{cubic}), and consider the first round. If Alice starts the game and chooses $v_k$ for any $k\in\{1,2,\ldots , 10\}$ in her first move, then Bob chooses the vertex $v_{k+4}$ in his next move, and if Alice chooses $v_k$ for any $k\in\{11,12,13,14\}$ in her first move, then Bob colors the vertex $v_{k-4}$ in his first move. Namely, if $k\in\{1,2,5,6,9,10\}$, then after these two moves the vertices $v_{k+1},v_{k+2},v_{k+3}$ form a triangle, and can no longer receive color $1$. Similarly, if $k\in\{13,14\}$, then the vertices $v_{k-3},v_{k-2},v_{k-1}$ form a triangle, and cannot be chosen in the first round. In this way, the game ends using four colors, since three colors are needed to color $C_3$ and since $G_H$ is a cubic graph, by Theorem~\ref{prop2}, $\chi_{ig}(G_H)\leq \Delta(G_H)+1=4$. Otherwise, if $k\in \{3,4,7,8,11,12\}$, vertices of a path on $6$ vertices are protected after the first two moves. More precisely, if $k\in\{3,7\}$, the vertices $v_{k-1},v_{k+1},v_{k+2},v_{k+3},v_{k+5}$, and $v_{k+6}$ are protected, if $k\in\{4,8\}$ the vertices $v_{k-2},v_{k-1},v_{k+1},v_{k+2},v_{k+3}$, and $v_{k+5}$ are protected, if $k=11$, the vertices $v_{k-5},v_{k-3},v_{k-2},v_{k-1},v_{k+1}$, and $v_{k+2}$ are protected, and if $k=12$, the same holds for the vertices $v_{k-6},v_{k-5},v_{k-3},v_{k-2},v_{k-1}$, and $v_{k+1}$. In fact, in each of these cases, the protected path can be extended by at least one vertex, so that at the end of round $1$ a path on at least $7$ vertices is protected. (For instance, if $k=3$, then the vertices $v_2,v_4,v_5,v_6,v_8$  and $v_9$, which induce a path $P$ on $6$ vertices, are protected. Now, if $v_1$ is played by Alice in her second move, then Bob can play $v_{11}$ and protect also $v_{10},v_{12}$ and $v_{13}$, thus extending the protected path to have $9$ vertices. On the other hand, if $v_1$ is not played by Alice as her second move, then Bob can ensure that after his second move $v_1$ is also protected, which together with $P$ gives a protected path on $7$ vertices. Other cases, when $k\in \{4,7,8,11,12\}$, can be dealt with in a similar way, so that a path on $6$ vertices, which is protected after the first two moves, is extended by at least one vertex yielding a protected $P_7$ after the first round.) Hence, the graph $G_H-C^1$ obtained from $G$ after deleting the vertices selected in the first round contains $P_7$ as an induced subgraph. By Corollary~\ref{cor:paths}, $\cig(G_H-C^1)\ge 3$, for all versions of the independence game chromatic numbers, therefore at least three more colors are needed to finish the game on $G_H$. Again, by Theorem~\ref{prop2}, $\chi_{ig}(G_H)\leq 4=\Delta(G_H)+1$, thus also in this case the game ends using four colors. 

Finally, if Alice starts the game by playing a vertex in $V(H)-\{v_1,v_{14}\}$ or if it is Bob who starts the game, then Bob chooses the vertex $v_6$ as his first move. In his next move, Bob chooses either $v_2$ or $v_{10}$, whichever is possible (according to the first move of Alice preceding this move of Bob). Note that after this move, a triangle is protected. Thus, we infer that also in this case the game lasts four rounds. We conclude that $\chi_{ig} (G_H)=4$. \qed
\end{proof}

We finish the section by studying the A-independence coloring game on the well-known split graphs. As it turns out, the number of moves in such a game equals the clique number, $\omega(G)$, of a split graph $G$, which yields another class of graphs that attain the lower bound in Theorem~\ref{prop2} for one of the independence game chromatic numbers. 

A graph is a \emph{split graph} if its vertex set can be partitioned into an independent set $I$ and a clique $C$. 
Note that in any split graph, one of the following holds \cite{hammer-81}:
\begin{enumerate}[(i)]
\item There exists a vertex $x\in I$ such that $C\cup\{x\}$ induces a complete graph. In this case, $C\cup\{x\}$ induces a maximum clique and $I$ is a maximum independent set.
\item There exists a vertex $x\in C$ such that $I\cup\{x\}$ is independent. In this case, $I\cup\{x\}$ is a maximum independent set and $C$ induces a maximum clique.
\item $C$ induces a maximal clique and $I$ is a maximal independent set. In this case, $G$ has a unique partition $(C,I)$ into a clique and an independent set, $C$ is the maximum clique, and $I$ is the maximum independent set.
\end{enumerate}

\begin{theorem}
\label{thm:split}
If $G$ is a split graph, then $\cigA(G)=\omega(G)$.
\end{theorem}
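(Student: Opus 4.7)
The plan is to establish the two matching inequalities $\cigA(G) \ge \omega(G)$ and $\cigA(G) \le \omega(G)$ separately. The lower bound is immediate from Theorem~\ref{prop2} combined with the standard fact that $\chi(G) = \omega(G)$ for split graphs: the clique $C$ already needs $|C|$ colors, and in case (i) the larger clique $C \cup \{x\}$ of size $\omega$ forces $\omega$ colors. The substantive content is therefore the upper bound, for which I would exhibit a strategy for Alice that terminates the game within $\omega$ rounds.

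The guiding observation I would exploit is that since $C$ is a clique, at most one vertex of $C$ can be colored in any single round, so I must arrange that $I$-vertices are absorbed into the rounds without ever demanding an extra one. The key invariant to set up is that once Alice opens a round by picking some vertex $v$, every uncolored non-neighbor of $v$ lying in $I$ must be colored in the same round, because such vertices are pairwise non-adjacent (all lying in the independent set $I$) and the players cannot pass while legal moves remain. Guided by this, the proof splits along the three structural cases recalled just before the theorem. In case (ii), Alice opens round $1$ with the distinguished vertex $x \in C$ that has no neighbor in $I$; this forces $\{x\} \cup I$ to be colored in round $1$, after which only the clique $C \setminus \{x\}$ of size $\omega - 1$ remains and consumes exactly $\omega - 1$ further rounds. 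In case (i), Alice plays the universal vertex $x \in I$ in round $1$; its neighbors are precisely $C$, so all of $I \setminus \{x\}$ is forced into round $1$, and the clique $C$ of size $\omega - 1$ uses the remaining $\omega - 1$ rounds. In case (iii), Alice simply plays an arbitrary remaining vertex $c_k \in C$ as the opening move of round $k$; by the invariant, the uncolored $I$-vertices after round $k$ are precisely $I \cap N(c_1) \cap \cdots \cap N(c_k)$, and after $\omega$ rounds this set equals $\{v \in I : N(v) \supseteq C\}$, which is empty because $C$ is a maximum clique.

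The step I expect to require the most careful justification is the invariant itself: that Bob cannot prolong any single round beyond what Alice's opening move dictates. Once Alice has committed to her opening vertex $v$, every uncolored non-neighbor of $v$ lies in $I$, so the candidate set for the remainder of the round is a subset of the independent set $I$ and is therefore pairwise non-adjacent; both players are then obliged to keep picking from this set until it is empty, leaving Bob no leverage to stretch the round. With this invariant verified, the case analysis yields $\cigA(G) \le \omega$ in each of the three cases, matching the lower bound and completing the proof.
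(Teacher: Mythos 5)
Your proposal is correct and follows essentially the same route as the paper's proof: the lower bound via $\cigA(G)\ge\chi(G)\ge\omega(G)$, and the upper bound via the same three-case split from \cite{hammer-81}, with Alice opening each round on the distinguished vertex (cases (i) and (ii)) or on an arbitrary uncolored clique vertex (case (iii)). Your explicitly stated invariant---that once Alice opens a round, the remaining moves of that round are forced because all uncolored non-neighbors of her vertex lie in the independent set $I$---is exactly the observation the paper phrases as ``the rest of the round is fixed up to the order of chosen vertices.''
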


\begin{proof}
If $G$ is a (split) graph, then $\cigA(G)\geq\chi (G)\geq \omega (G)$.
Therefore, we need to prove that $\cigA(G)\leq \omega (G)$. From \cite{hammer-81} we know that $G$ can be partitioned into an independent set $I$ and a clique $C$ such that one of the three possibilities listed above holds.

First, suppose that the (i) is true, that is, there exists a vertex $x\in I$ such that $C\cup\{x\}$ induces a complete graph. Now, assume that Alice and Bob play an A-independence coloring game on $G$. The strategy of Alice is to choose in her first move the vertex $x$. Then all vertices from $C$ are protected, since $C\cup\{x\}$ is a complete graph. On the other hand, all vertices from $I$ will receive color $1$, since $I$ is an independent set and $x\in I$. Since all vertices from $C\cup\{x\}$ will receive pairwise different colors, the total number of rounds will be $|C|+1$, which is the size of the maximum clique. Thus, $\cigA(G)\leq \omega (G)$. The case when (ii) holds is essentially the same as case (i). Again, Alice will start with the vertex $x$, only that now $x\in C$. However, all vertices of $I$ will be chosen in the first round, which leaves only $\omega(G)-1$ vertices for the next rounds. Hence, $\cigA(G)\leq \omega (G)$.

In the last possibility when (iii) holds in $G$, $C$ is the maximum clique, and $I$ is the maximum independent set. Since $C$ is a maximal clique, no vertex of $I$ is adjacent to all vertices of $C$. Each round of the game is started by Alice, and she chooses a (previously unchosen) vertex $z$ from $C$. The rest of the round is fixed up to the order of chosen vertices, notably, Alice and Bob will have to choose exactly the vertices of $I$ that are not adjacent to $z$. Since no vertex of $I$ is adjacent to all vertices of $C$, the game will last $|C|$ rounds, which is equal $\omega (G)$. Thus, also in this case, $\cigA(G)\leq \omega (G)$, which completes the argument. \qed
\end{proof}

\begin{remark}
Note that the analogues of Theorem~\ref{thm:split} do not hold on all split graphs for the AB-independence, the B-independence and the BA-independence coloring games. Suppose that Alice and Bob play one of the mentioned games on a split graph $G$ and that we are in case (iii) ($C$ is the maximum clique and $I$ the maximum independent set in $G$). If Bob starts the round $k$ for some $k\in\{2,\ldots ,\omega(G)\}$ and chooses a vertex $u\in I$ such that $u$ is adjacent to all vertices in $C$, which have not yet been colored (if such a vertex $u$ exists in round $k$), then no vertex from $C$ can be colored with color $k$. This means that at least $\omega(G)+1$ colors will be needed to finish the game on $G$, since $\omega(G)$ vertices of $C$ must clearly receive $\omega(G)$ different colors and none of them receives color $k$.
\end{remark}


\section{Trees}
\label{sec:trees}

The following general result can be useful in proving bounds for the independence game chromatic numbers. We introduce another variant that is closely related to the independence game chromatic number. The only difference from the independence coloring game is that one of the players is allowed to skip moves. 

Suppose that Alice and Bob are playing an independence coloring game with just one change, by which Alice is allowed (but not required) to skip any number of moves. That is, in each round Alice and Bob are taking turns by selecting a previously unselected vertex of a graph $G$ in such a way that an independent set is created. However, whenever she wants, Alice can choose to pass a move and it is Bob's turn again. We call such a game in which Alice is allowed to skip any number of moves an {\em Alice-skip independence coloring game}; we also assume that Alice starts this game (yet, she may decide to skip the first move as well), and each new round is started by the player who did not end the previous round.  The resulting number of rounds in an Alice-skip independence coloring game in which Alice and Bob are playing optimally according to their goals, is the {\em Alice-skip independence game chromatic number}, $\cigas(G)$, of $G$. Clearly, since Alice is allowed to play without skipping any move in the Alice-skip independence coloring game, $\cigas(G)\le \cigAB(G)$ in an arbitrary graph $G$. Also, $\cigas(G)\le \cigBA(G)$, since Alice can choose to pass only the first move and then a BA-independence coloring game is played. It is also easy to see that $\cigas(G)\le \cigB(G)$, since Alice may choose to pass the first move of any round in which it would be her turn, and so a B-independence coloring game is played.

\begin{lemma}
\label{lem:Alice-skip}
Let Alice and Bob play an Alice-skip independence coloring game on a graph $G$. If Bob can ensure that after the first round there is a component $H$ of the graph $G-C^1$ (where $C^1$ is the set of vertices selected in the first round), then  $\cigas(G)\ge \cigas(H)+1$. In addition, $\cigAB(G)\ge \cigas(H)+1$, $\cigBA(G)\ge \cigas(H)+1$, and $\cigB(G)\ge \cigas(H)+1$. 
\end{lemma}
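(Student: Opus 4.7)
The plan is to prove the stronger bound $\cigas(G)\ge \cigas(H)+1$ directly and then invoke the already-noted comparisons $\cigas(G)\le \cigAB(G),\cigBA(G),\cigB(G)$ to obtain the remaining three inequalities at once. For the stronger bound I would describe a two-phase strategy for Bob in the Alice-skip coloring game on $G$. In round $1$ Bob follows his hypothesized strategy that makes $H$ a connected component of $G-C^1$. In rounds $k\ge 2$ Bob fixes an optimal Bob-strategy $\tau$ from an Alice-skip coloring game on $H$ witnessing the value $\cigas(H)$: whenever it is his turn in the real game and $\tau$ prescribes a move in $H$, he plays it; otherwise he plays any legal vertex outside $H$ (which is possible only once no move in $H$ remains for the current virtual round).

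The next step would be to verify that the sub-sequence of $H$-moves across the real rounds $2,3,\ldots$ is a legitimate play of an Alice-skip coloring game on $H$. Because $H$ is a connected component of $G-C^1$, no vertex of $H$ has a neighbor outside $H$, so in-$H$ and outside-$H$ moves impose no constraints on one another. A key consequence is that for every round $k\ge 2$ of the real game, the set $C^k\cap H$ is a maximal independent set in $H-\bigcup_{i<k}(C^i\cap H)$: otherwise some uncolored $v\in H$ would have no neighbor in $C^k$ at all, so $C^k\cup\{v\}$ would still be independent in $G$, contradicting that round $k$ has ended. Therefore $C^2\cap H,\,C^3\cap H,\ldots$ really are the rounds of a virtual Alice-skip game on $H$, where Alice's moves outside $H$ and her genuine skips both count as virtual skips and Bob's in-$H$ moves are exactly those dictated by $\tau$. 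Since Bob follows $\tau$, the virtual game lasts at least $\cigas(H)$ rounds, and as virtual round $r$ is realized inside real round $r+1$, we obtain $\cigas(G)\ge \cigas(H)+1$.

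The main obstacle will be to show that $\tau$ is always confronted with a legal Alice-skip state on $H$. The Alice-skip rule prescribes that virtual round $k$ be started by the non-ender of virtual round $k-1$, whereas in the real game the starter of round $k+1$ is the non-ender of the full real round $k$, which need not coincide with the last player who moved inside $H$. Here I would exploit Alice's freedom to skip: whenever the actual starter of the next real round differs from the one expected by the Alice-skip rule on $H$, the discrepancy can be absorbed by reinterpreting Alice's real moves outside $H$, or her absence of a real move while the virtual round is already finished, as virtual skips, so that $\tau$ always sees a valid Alice-skip history on $H$. Once this bookkeeping is in place, Bob's strategy executes correctly, the virtual game really contributes at least $\cigas(H)$ rounds beyond round $1$, and the bound follows.
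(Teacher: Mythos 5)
Your proposal follows essentially the same imagination-strategy argument as the paper: Bob isolates $H$ in round one and then simulates an optimal Alice-skip game on $H$, treating Alice's genuine skips and her moves outside $H$ as virtual skips, with the remaining three inequalities falling out of $\cigas\le\cigAB,\cigBA,\cigB$. Your write-up is in fact more detailed than the paper's, which asserts the conclusion without discussing the maximality of $C^k\cap V(H)$ in $H$ or the round-boundary turn-order bookkeeping that you explicitly (and rightly) identify as the delicate point.
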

\proof
Suppose that after the first round of an Alice-skip independence coloring game, the set $C^1$ has been selected, and one of the components of the graph $G-C^1$ is $H$. Then, in the next rounds Bob imagines to play an Alice-skip independence coloring game in $H$. That is, in each round Bob plays only in the subgraph $H$ as long as possible by using his optimal strategy for the Alice-skip independence coloring game in $H$. If this is no longer possible (that is, after the vertices selected in $H$ in a given round form a maximal independent  set in $H$), then Bob plays in other components of $G-C^1$ in any way. Clearly, in the end at least $\cigas(H)$ rounds will be played in the graph $G-C^1$, hence altogether there will be at least $1+\cigas(H)$ rounds. The last sentence follows from the fact that  $\cigas(G)\le \cigAB(G)$,  $\cigas(G)\le \cigBA(G)$, and  $\cigas(G)\le \cigB(G)$ in any graph $G$.
\qed

Recall that a {\em perfect $n$-ary tree} is a rooted tree in which all leaves have the same depth and all interior vertices have $n$ children. We will use the notation $T(n,d)$ for the perfect $n$-ary tree of depth $d$.  We label the vertices of such a tree in the following way. The root is labeled by the empty label $()$, while the labels of the $n$ children of an arbitrary interior vertex in the tree, which is labeled by $\ell$ are obtained from $\ell$ by adding an integer from $\{1,2,\ldots ,n\}$ to the right side of label $\ell$. More specifically, $1$ is added to the right of $\ell$ for the left-most child, the second left-most child is labeled by $(\ell 2)$, and the right-most child gets the label $(\ell n)$. This means that the vertices on the $k$th level (where the root is on the $0$th level) of $T(n,d)$ are the $k$-tuples of integers from $\{1,2,\ldots ,n\}$. The left-most leaf is labeled by $(1\ldots 1)$ and the right-most leaf by $(n\ldots n)$. In Fig.~\ref{fig:tree}, $T(3,3)$ is depicted together with its labels. If $T$ is a copy of the tree $T(n,d)$ and $(\ell)$ is the label of a vertex of $T$, then by $T(\ell)$ we denote the subtree of $T$ rooted at $(\ell)$. Note that $T(\ell)$ contains the vertices whose label starts with $\ell$. 

\begin{figure}[h]
\begin{center}
\begin{tikzpicture}
\def\vr{2pt}
\def\len{1}

\coordinate(v0) at (0,0);
\coordinate (v1) at (-4.5,-1.5);
\coordinate (v2) at (0,-1.5);
\coordinate (v3) at (4.5,-1.5);

\coordinate (v11) at (-6,-3);
\coordinate (v21) at (-4.5,-3);
\coordinate (v31) at (-3,-3);
\coordinate (v12) at (-1.5,-3);
\coordinate (v22) at (-0,-3);
\coordinate (v32) at (1.5,-3);
\coordinate (v13) at (3,-3);
\coordinate (v23) at (4.5,-3);
\coordinate (v33) at (6,-3);

\coordinate (v111) at (-6.5,-4.5);
\coordinate (v211) at (-6,-4.5);
\coordinate (v311) at (-5.5,-4.5);
\coordinate (v121) at (-5,-4.5);
\coordinate (v221) at (-4.5,-4.5);
\coordinate (v321) at (-4,-4.5);
\coordinate (v131) at (-3.5,-4.5);
\coordinate (v231) at (-3,-4.5);
\coordinate (v331) at (-2.5,-4.5);

\coordinate (v112) at (-2,-4.5);
\coordinate (v212) at (-1.5,-4.5);
\coordinate (v312) at (-1,-4.5);
\coordinate (v122) at (-0.5,-4.5);
\coordinate (v222) at (-0,-4.5);
\coordinate (v322) at (0.5,-4.5);
\coordinate (v132) at (1,-4.5);
\coordinate (v232) at (1.5,-4.5);
\coordinate (v332) at (2,-4.5);

\coordinate (v113) at (2.5,-4.5);
\coordinate (v213) at (3,-4.5);
\coordinate (v313) at (3.5,-4.5);
\coordinate (v123) at (4,-4.5);
\coordinate (v223) at (4.5,-4.5);
\coordinate (v323) at (5,-4.5);
\coordinate (v133) at (5.5,-4.5);
\coordinate (v233) at (6,-4.5);
\coordinate (v333) at (6.5,-4.5);

\draw(v111)[fill=black] circle(\vr);
\draw(v211)[fill=black] circle(\vr);
\draw(v311)[fill=black] circle(\vr);
\draw(v121)[fill=black] circle(\vr);
\draw(v221)[fill=black] circle(\vr);
\draw(v321)[fill=black] circle(\vr);
\draw(v131)[fill=black] circle(\vr);
\draw(v231)[fill=black] circle(\vr);
\draw(v331)[fill=black] circle(\vr);

\draw(v112)[fill=black] circle(\vr);
\draw(v212)[fill=black] circle(\vr);
\draw(v312)[fill=black] circle(\vr);
\draw(v122)[fill=black] circle(\vr);
\draw(v222)[fill=black] circle(\vr);
\draw(v322)[fill=black] circle(\vr);
\draw(v132)[fill=black] circle(\vr);
\draw(v232)[fill=black] circle(\vr);
\draw(v332)[fill=black] circle(\vr);

\draw(v113)[fill=black] circle(\vr);
\draw(v213)[fill=black] circle(\vr);
\draw(v313)[fill=black] circle(\vr);
\draw(v123)[fill=black] circle(\vr);
\draw(v223)[fill=black] circle(\vr);
\draw(v323)[fill=black] circle(\vr);
\draw(v133)[fill=black] circle(\vr);
\draw(v233)[fill=black] circle(\vr);
\draw(v333)[fill=black] circle(\vr);

\draw(v0)[fill=black] circle(\vr);
\draw(v1)[fill=black] circle(\vr);
\draw(v2)[fill=black] circle(\vr);
\draw(v3)[fill=black] circle(\vr);

\draw(v11)[fill=black] circle(\vr);
\draw(v21)[fill=black] circle(\vr);
\draw(v31)[fill=black] circle(\vr);
\draw(v12)[fill=black] circle(\vr);
\draw(v22)[fill=black] circle(\vr);
\draw(v32)[fill=black] circle(\vr);
\draw(v13)[fill=black] circle(\vr);
\draw(v23)[fill=black] circle(\vr);
\draw(v33)[fill=black] circle(\vr);

\draw (v0) -- (v1) -- (v11) -- (v111);  
\draw (v0) -- (v2);
\draw (v0) -- (v3);
\draw (v1) -- (v11);
\draw (v1) -- (v21);
\draw (v1) -- (v31);
\draw (v2) -- (v12);
\draw (v2) -- (v22);
\draw (v2) -- (v32);
\draw (v3) -- (v13);
\draw (v3) -- (v23);
\draw (v3) -- (v33);
\draw (v11) -- (v111);
\draw (v11) -- (v211);
\draw (v11) -- (v311);
\draw (v21) -- (v121);
\draw (v21) -- (v221);
\draw (v21) -- (v321);
\draw (v31) -- (v131);
\draw (v31) -- (v231);
\draw (v31) -- (v331);

\draw (v12) -- (v112);
\draw (v12) -- (v212);
\draw (v12) -- (v312);
\draw (v22) -- (v122);
\draw (v22) -- (v222);
\draw (v22) -- (v322);
\draw (v32) -- (v132);
\draw (v32) -- (v232);
\draw (v32) -- (v332);

\draw (v13) -- (v113);
\draw (v13) -- (v213);
\draw (v13) -- (v313);
\draw (v23) -- (v123);
\draw (v23) -- (v223);
\draw (v23) -- (v323);
\draw (v33) -- (v133);
\draw (v33) -- (v233);
\draw (v33) -- (v333);

\draw(v0)node[above]{\footnotesize{$()$}};
\draw(v1)node[left]{\footnotesize{$(1)$}};
\draw(v2)node[left]{\footnotesize{$(2)$}};
\draw(v3)node[right]{\footnotesize{$(3)$}};
\draw(v11)node[left]{\footnotesize{$(11)$}};
\draw(v21)node[left]{\footnotesize{$(12)$}};
\draw(v31)node[left]{\footnotesize{$(13)$}};
\draw(v12)node[left]{\footnotesize{$(21)$}};
\draw(v22)node[left]{\footnotesize{$(22)$}};
\draw(v32)node[left]{\footnotesize{$(23)$}};
\draw(v13)node[left]{\footnotesize{$(31)$}};
\draw(v23)node[left]{\footnotesize{$(32)$}};
\draw(v33)node[left]{\footnotesize{$(33)$}};

\draw(v111)node[below]{\tiny{$(111)$}};
\draw(v211)node[above]{\tiny{$(112)$}};
\draw(v311)node[below]{\tiny{$(113)$}};
\draw(v121)node[above]{\tiny{$(121)$}};
\draw(v221)node[below]{\tiny{$(122)$}};
\draw(v321)node[above]{\tiny{$(123)$}};
\draw(v131)node[below]{\tiny{$(131)$}};
\draw(v231)node[above]{\tiny{$(132)$}};
\draw(v331)node[below]{\tiny{$(133)$}};

\draw(v112)node[above]{\tiny{$(211)$}};
\draw(v212)node[below]{\tiny{$(212)$}};
\draw(v312)node[above]{\tiny{$(213)$}};
\draw(v122)node[below]{\tiny{$(221)$}};
\draw(v222)node[above]{\tiny{$(222)$}};
\draw(v322)node[below]{\tiny{$(223)$}};
\draw(v132)node[above]{\tiny{$(231)$}};
\draw(v232)node[below]{\tiny{$(232)$}};
\draw(v332)node[above]{\tiny{$(233)$}};

\draw(v113)node[below]{\tiny{$(311)$}};
\draw(v213)node[above]{\tiny{$(312)$}};
\draw(v313)node[below]{\tiny{$(313)$}};
\draw(v123)node[above]{\tiny{$(321)$}};
\draw(v223)node[below]{\tiny{$(322)$}};
\draw(v323)node[above]{\tiny{$(323)$}};
\draw(v133)node[below]{\tiny{$(331)$}};
\draw(v233)node[above]{\tiny{$(332)$}};
\draw(v333)node[below]{\tiny{$(333)$}};

\end{tikzpicture}
\end{center}
\caption{Perfect $3$-ary tree of depth $3$, $T(3,3)$.} 
\label{fig:tree}
\end{figure}
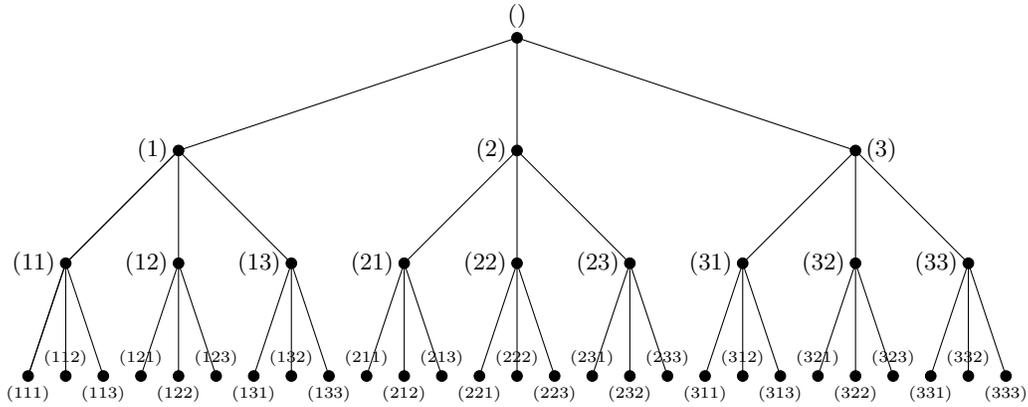

\begin{theorem}
\label{thm:trees}
For any positive integer $k$ there exists a tree $G_k$ such that $\cigas(G_k)\ge k$.   
\end{theorem}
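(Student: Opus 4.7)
\proof
The plan is induction on $k$, using Lemma~\ref{lem:Alice-skip} as the recursive engine. At each inductive step, it suffices to ensure that Bob has a first-round strategy on $G_k$ that forces at least one component of $G_k - C^1$ to have $\cigas$ at least $k-1$; the conclusion $\cigas(G_k) \ge k$ will then follow. The base cases $k \le 2$ are handled by $G_1 = K_1$ and $G_2 = K_2$.

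For the inductive step, given a tree $G_{k-1}$ with $\cigas(G_{k-1}) \ge k-1$, I would construct $G_k$ by taking a root $r$ and attaching to it many pairwise disjoint copies of $G_{k-1}$ via short stalks. The construction, which may be cast as a specific perfect $n$-ary tree $T(n,d)$ or as a spider-with-copies with padding, has two essential features:
\begin{enumerate}[(a)]
\item no single first move of Alice can collapse the game in two rounds (as happens on the star $K_{1,N}$, where Alice plays the center); and
\item Bob has a round-$1$ strategy after which some copy of $G_{k-1}$ is a component of $G_k - C^1$.
\end{enumerate}
Bob's strategy would open with a move near $r$ that simultaneously protects the attachment points of all copies, and would thereafter confine his own plays either to copies already touched by Alice or to a reserved pool of decoy vertices of the central structure. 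This confines the contamination of fresh copies to moves actually made by Alice.

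The main obstacle is Alice's flexibility: she can skip, and she can attempt to contaminate every copy with a single move each. Skipping is handled by the decoys, which absorb Bob's otherwise-wasted moves without contaminating new copies. Contaminating every copy is more delicate and requires the padding of the construction together with a slightly strengthened inductive hypothesis, asserting that $\cigas$ remains at least $k-1$ even when a small pendant structure is attached (so that either some copy remains entirely uncontaminated and yields the component $H$ directly, or the gadget structure guarantees that each contaminated copy still hosts a component of $\cigas \ge k-1$). A case analysis over Alice's first move --- at $r$, at a stalk vertex, inside a copy, or a skip --- then shows Bob can always achieve his goal, and Lemma~\ref{lem:Alice-skip} closes the induction. \qed
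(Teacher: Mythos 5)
Your skeleton --- induction on $k$ driven by Lemma~\ref{lem:Alice-skip}, with Bob's round-$1$ goal being to leave a component of $G_k-C^1$ at least as hard as $G_{k-1}$ --- is exactly the paper's skeleton, and you correctly isolate the two dangers (Alice skipping; Alice touching every copy). The gap is that your resolution of the second danger is precisely where the proof must do its real work, and ``padding of the construction together with a slightly strengthened inductive hypothesis'' does not supply it. In a ``root plus many disjoint copies of $G_{k-1}$ on stalks'' construction, the danger is not merely that Alice \emph{might} contaminate every copy: she provably will, because round~$1$ ends only when $C^1$ is a maximal independent set of $G_k$, so $C^1$ meets every copy nontrivially no matter what Bob does. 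Hence the branch ``some copy remains entirely uncontaminated'' is never available, and the other branch --- that a copy with vertices already given colour~$1$ ``still hosts a component of $\cigas\ge k-1$'' --- is essentially the full strength of the theorem applied recursively one level down; you do not state the strengthened hypothesis, and no small pendant gadget obviously yields it.

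The paper resolves this not by multiplying disjoint copies but by building redundancy into \emph{every level} of a single perfect $n$-ary tree: $G_k=T(n,d)$ with $n=2n'+1$ and $d=d'+2$, where $n'$ and $d'$ are the arity and depth of $G_{k-1}$. Bob does not defend a pre-designated copy; he grows the protected subtree $T'\cong G_{k-1}$ adaptively, top-down, protecting a vertex by playing one of its children, and at each already-protected internal node he runs a pairing argument: each time Alice is the first to enter one of that node's $2n'+1$ child-subtrees, Bob enters a fresh sibling subtree and protects its root, so Alice can spoil at most $n'$ children while Bob secures $n'$ clean ones. The depth surplus $d=d'+2$ pays for the shift of the root of $T'$ one level down and for the fact that protecting a vertex consumes one of its children. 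To complete your argument you would need to replace ``padding'' and the unstated strengthened hypothesis with this (or an equivalent) per-node doubling mechanism; as written, the induction does not close.
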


\begin{proof}
Let $G_1$ be the one-vertex graph, and let $G_2$ be the path $P_2$. Clearly, $\cigas(G_1)=1$, and $\cigas(G_2)=2$. Next, let $G_3$ be the perfect binary tree $T(2,3)$. It is easy to see that $\cigas(G_3)\ge 3$. Indeed, Bob can ensure that two vertices at distance $3$ are selected in the first two consecutive moves in an Alice-skip independence coloring game played on $G_3$ by which at least three rounds will be needed in the game.

Now, let $k\ge 4$ be a positive integer. Let $G_k$ be the tree $T(3\cdot2^{k-3}-1,2k-3)$ (for $k=3$, this gives $T(2,3)$ which will be the base case).
Assume that Alice and Bob are playing an Alice-skip independence coloring game on $G_k$.  We will prove that Bob can ensure that after the first round of this game, with $C^1$ as the set of vertices selected in the first round, there is a component of $G-C^1$ that is isomorphic to $G_{k-1}$ (note that $G_{k-1}=T(3\cdot2^{k-4}-1,2k-5)$). By Lemma~\ref{lem:Alice-skip}, $\cigas(G_k)\ge \cigas(G_{k-1})+1$. By using induction on $k$, we infer that $\cigas(G_k)\ge k$.
(As noted above, the basis of induction is settled by $\cigas(G_3)\ge 3$.) 

To simplify the notation, let $n=3\cdot2^{k-3}-1$ denote the degree of non-leaf vertices in $G_k$ and $d=2k-3$ the depth of $G_k$, while $n'=3\cdot2^{k-4}-1$ is the non-leaf degree in $G_{k-1}$ and $d'=2k-5$ is its depth. Note that $n=2n'+1$ and $d=d'+2$. 

Let Alice and Bob play the first round (using color $1$) of an Alice-skip independence coloring game on a tree $T$ isomorphic to $G_k$. Bob will ensure that a subgraph $T'$ isomorphic to $G_{k-1}$ is a component of $T-C^1$, where $C^1$ is the set of vertices chosen in the first round. We start by determining the root of the tree $T'$, which will be protected after the first round, according to Bob's strategy. First, if Alice selects the root $()$ of $T$ as the first move in the game on $T$, then the root of $T'$ will be the  vertex $(1)$. Similarly, if Alice plays in any subtree $T(i)$, but not a neighbor of the root $()$, then Bob will play the root $()$. We may assume without loss of generality that Alice's first move was in the subtree $T(i)$, where $i>1$. Then Bob can choose $(1)$ to be the root of the tree $T'$. Finally, if Alice plays a neighbor of $()$, say $(i)$ with $i>1$, then the root $()$ of $T$ will also be the root of $T'$, while all other vertices of $T'$ will be in $T(1)$. We will consider in full detail only the case when one of the players starts the game by choosing $()$, while the other case can be dealt with in a similar way. 

Since the root $()$ is already colored by $1$, its neighbors, which are the vertices of the first level of $T$, can no longer receive color $1$. In other words, the vertices in the first level of $T$ are now protected. In particular, $(1)$ is protected. Note that $(1)$ is the root of a tree $T'$. The strategy of Bob is to protect vertices of $T'$ one by one, where the choice of which vertex will be added to $T'$ as a new protected vertex is in part based on Alice's moves. Yet, vertices of $T'$ will always be protected in up-to-down order, meaning that a vertex of $T'$ will be protected only if its parent has been protected earlier.  At the end, the resulting protected tree $T'$ should be isomorphic to $G_{k-1}$.  Note that Bob considers only the moves in the subtree $T(1)$, even though Alice may play also outside this subtree; hence Bob imagines an Alice-skip game is played in the subtree $T(1)$.

The start is clear, since the vertex $(1)$ (which is the root of $T'$) is protected after one the first two moves of the game. If Alice does not play her next move in the subtree $T(1)$, then Bob can protect a child of $(1)$, say $(11)$, by playing its child, say $(11n)$. But even if Alice plays in $T(1)$, Bob can protect a child of $(1)$ in his next move. In particular, if Alice plays in the subtree $T(1i)$, then Bob can protect the child $(1j)$, where $j\neq i$, by playing its child, say $(1jn)$. Clearly, after these moves, the root $(1)$ and one of its children, $(1j)$ are added to $T'$ and are protected. 

Now, let us present the strategy of Bob in general. Let $S'$ be the set of vertices that have been protected up to a certain point in the game and thus belong to $T'$, and it is Alice's turn. (Note that by Bob's strategy, the vertices of $S'$ induce a subtree of $T'$ containing the root $(1)$.) 
Suppose that Alice selects a vertex $x$ in $T(1)$. Then $x$ is in the subtree $T(1v_2\ldots v_t)$, where $(1v_2\ldots v_t)$ is the closest predecessor to $x$ that is protected (clearly, such a predecessor exists, since $(1)\in V(T(1))$ is protected). Now, there are two possibilities. If in $S'$ there are already $n'$ (protected) children of $(1v_2\ldots v_t)$, then Bob chooses any unprotected vertex $y\in V(T(1))$ whose parent $w$ is already in $S'$ and $w$ does not yet have $n'$ protected children, and is as close as possible to the root; Bob then protects $y$ in his next move. Otherwise, if there are less than $n'$ children of $(1v_2\ldots v_t)$ in $S'$, then Bob chooses an unprotected child $z$ of $(1v_2\ldots v_t)$, and protects it in his next move by selecting a child of $z$. Moreover, he can choose $z$ to be the child of $(1v_2\ldots v_t)$ such that no vertex of $T(z)$ has been chosen earlier in the game. Indeed, $(1v_2\ldots v_t)$ is the closest predecessor to $x$ that is protected, and whenever Alice plays in the subtree $T(1v_2\ldots v_t)$ Bob's strategy is to follow with a move in $T(1v_2\ldots v_t)$. More precisely, whenever she selects a vertex in the subtree $T(1v_2\ldots v_tu)$ of $T(1v_2\ldots v_t)$ such that no vertex from $T(1v_2\ldots v_tu)$ has been selected earlier, Bob chooses another subtree $T(1v_2\ldots v_tv)$ of $T(1v_2\ldots v_t)$, where $v\neq u$, such that no vertex from $T(1v_2\ldots v_tv)$ has been selected earlier, and he protects the child $(1v_2\ldots v_tv)$ of $(1v_2\ldots v_t)$ (by selecting, say $(1v_2\ldots v_tvn)$). Since there are $n=2n'+1$ children of $(1v_2\ldots v_t)$, this implies that even if Alice is the first to select a vertex in the subtree $T(1v_2\ldots v_t)$ Bob can ensure that $n'$ children of $(1v_2\ldots v_t)$ are protected and added to $T'$. 

Finally, if Alice skips a move, or if she does not play in the subtree $T(1)$, then until $S'\subsetneq V(T')$ Bob chooses any unprotected vertex $y$ whose parent $w$ is in $S'$ (and is thus protected), $w$ is on the level smaller than $d-1$, but does not yet have $n'$ protected children, and Bob protects $y$ in his next move. 

Since the depth $d'$ of $G_{k-1}$ equals $d-2$, where $d$ is the depth of $G_k$,   it is clear that Bob can protect a subtree $T'$ isomorphic to $G_{k-1}$ in the first round, by which the proof is complete.  
\qed
\end{proof}

As an immediate consequence of Theorem~\ref{thm:trees} we infer
\begin{corollary}
\label{cor:trees}
For any positive integer $k$ there exists a tree $G_k$ such that $\cigAB(G_k)\ge k$, $\cigBA(G_k)\ge k$, and $\cigB(G_k)\ge k$. 
\end{corollary}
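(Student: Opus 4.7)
The plan is to invoke Theorem~\ref{thm:trees} directly, together with the three elementary inequalities between the Alice-skip version and the other three versions of the game that were observed in the paragraph just preceding Lemma~\ref{lem:Alice-skip}. Recall that for every graph $G$ one has
\[
\cigas(G)\le \cigAB(G),\qquad \cigas(G)\le \cigBA(G),\qquad \cigas(G)\le \cigB(G),
\]
the first because Alice may simply decline to use her option of skipping moves, the second because Alice may skip only her very first move and thereafter a BA-independence coloring game is played, and the third because in every round it would be Alice's turn first but she can choose to pass, effectively letting Bob start each round.

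Now take the family of trees $\{G_k\}_{k\ge 1}$ constructed in Theorem~\ref{thm:trees}, for which $\cigas(G_k)\ge k$. Combining this lower bound with the three inequalities displayed above yields
\[
\cigAB(G_k)\ge \cigas(G_k)\ge k,\quad \cigBA(G_k)\ge \cigas(G_k)\ge k,\quad \cigB(G_k)\ge \cigas(G_k)\ge k,
\]
which is exactly the statement of the corollary. There is no real obstacle here: the entire content of the argument is hidden in Theorem~\ref{thm:trees}, and the corollary is a one-line transfer of that result to the other three variants via inequalities that were already established. Thus the same sequence of trees $G_k$ witnesses the unboundedness of $\cigAB$, $\cigBA$, and $\cigB$ on trees.
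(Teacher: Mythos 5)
Your proof is correct and is exactly the argument the paper intends: the corollary is stated as an immediate consequence of Theorem~\ref{thm:trees} via the inequalities $\cigas(G)\le \cigAB(G)$, $\cigas(G)\le \cigBA(G)$, and $\cigas(G)\le \cigB(G)$ recorded just before Lemma~\ref{lem:Alice-skip}. Nothing is missing.
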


\begin{remark}
By using a similar construction as in Theorem~\ref{thm:trees} one can also prove that there exists a tree $T$ such that $\cigA(T)$ can be arbitrarily large. Since the proof methods are analogous to the above, we omit the proof.
\end{remark}

\section{Comparing the independence game chromatic numbers}
\label{sec:compare}

Let $k$ be an arbitrary positive integer and $K_{k,k}$ be the complete bipartite graph with partite sets $A=\{a_1,\ldots ,a_{k}\}$ and $B=\{b_1,\ldots ,b_{k}\}$. Let $M$ be a perfect matching in $K_{k,k}$, say $M=\{a_1b_1,\ldots ,a_{k}b_{k}\}$. Let $G_1^k$ be the graph $K_{k,k}-M$.

\begin{lemma}
\label{lem:G1-A-AB}
$\cigA(G_1^k)\ge k$, and $\cigAB(G_1^k)\ge k$.
\end{lemma}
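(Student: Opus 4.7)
My plan is to exhibit an explicit pairing strategy for Bob that forces exactly two vertices to be colored in each round, giving $k$ rounds overall. The decisive structural observation about $G_1^k = K_{k,k}-M$ is that $a_i$ is adjacent to every $b_j$ with $j\ne i$ (and symmetrically), so any independent set that meets both partite classes is contained in a single matching pair $\{a_i,b_i\}$; moreover each such pair is itself a \emph{maximal} independent set of $G_1^k$, because every remaining $a_\ell$ is adjacent to $b_i$ and every remaining $b_\ell$ is adjacent to $a_i$.

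Bob's strategy is the obvious pairing: whenever Alice selects $a_i$ in any round, Bob replies with $b_i$, and whenever she selects $b_i$, he replies with $a_i$. The reply is always legal, since $a_i b_i\notin E(G_1^k)$ and the partner is still unselected (the whole pair is either present or absent in the current subgraph). Immediately after Bob's reply, no further vertex can join the round by the structural observation, so the round terminates with $C^r=\{a_{i_r},b_{i_r}\}$ for some index $i_r$. The subgraph induced by the yet unselected vertices is then isomorphic to $G_1^{k-1}$, and one may apply induction on $k$ (with base case $k=1$, where the graph is edgeless and one round is needed).

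To complete the induction in both the A- and AB-versions, I would check that Alice is the one starting every round. In the A-game this is by definition; in the AB-game it follows because under Bob's strategy each round has length exactly $2$ and ends on Bob's move, so the ``the player who did not end the previous round starts the next'' rule keeps Alice as the starter of every round. Hence the remaining game after round $1$ is a game of the same type (A or AB) played on $G_1^{k-1}$, and the inductive hypothesis yields at least $k-1$ further rounds, for $\cigA(G_1^k)\ge k$ and $\cigAB(G_1^k)\ge k$.

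The main thing to verify carefully, rather than a genuine obstacle, is that Alice has no way to evade the pairing. Her move in any round must be some unselected $a_\ell$ or $b_\ell$, and Bob's prescribed partner response is always both available and non-adjacent; she cannot skip (this is not the Alice-skip game) and she cannot force a longer first round, since after her first vertex is played any second vertex that is legal must be its matching partner. This leaves the bookkeeping about who starts each round as the only point requiring attention, and that is handled by the length-two observation above.
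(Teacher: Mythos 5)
Your proof is correct and follows essentially the same route as the paper: Bob's strategy of answering each move of Alice with the matching partner along $M$, which forces every round to consist of exactly one pair $\{a_i,b_i\}$ and hence $k$ rounds in total. You merely make explicit two points the paper leaves implicit (that $\{a_i,b_i\}$ is a maximal independent set, and that Alice starts every round in the AB-game because each round ends on Bob's move), which is fine.
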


\begin{proof}
Suppose that Alice and Bob play an A-independence or an AB-independence coloring game on $G_1^k$. In both games Alice has the first move in the first round of the game. Without loss of generality, we may assume that Alice chooses the vertex $a_1$. Bob answers by choosing $b_1$, after which the first round ends (all the remaining vertices are in $N(a_1)\cup N(b_1)$). 
In the next round of the game, it is again Alice's turn (in either of the mentioned games). Clearly, Bob can achieve that each round ends in two moves, by choosing a vertex connected along the edge of $M$ with the vertex chosen by Alice in that round. In this way, the game ends in $k$ rounds. \qed
\end{proof}

Note that $G_1^k$ enjoys the statement (3) in Theorem~\ref{izrek_dvaB}, hence we infer the following result.

\begin{lemma}
\label{lem:G1-B-BA}
$\cigB(G_1^k)=\cigBA(G_1^k)=2$.
\end{lemma}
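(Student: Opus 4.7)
The approach is to apply Theorem~\ref{izrek_dvaB} directly, since the statement already hints that $G_1^k$ satisfies condition~(3) of that theorem. The lemma becomes an essentially routine verification of the bipartition condition, with the lower bound $\geq 2$ coming for free from the fact that $G_1^k$ has at least one edge (assuming $k\ge 2$; for $k=1$ the graph is edgeless and the lemma is trivial or excluded).

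First I would note that $G_1^k$ is bipartite with bipartition $V(G_1^k)=(A,B)$, where $A=\{a_1,\ldots,a_k\}$ and $B=\{b_1,\ldots,b_k\}$. In this graph, the neighborhood of $a_i$ is $N(a_i)=B\setminus\{b_i\}$, and symmetrically $N(b_i)=A\setminus\{a_i\}$, because $K_{k,k}$ has all cross edges and we have removed exactly the matching edges $a_ib_i$.

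Next I would verify condition~(3) of Theorem~\ref{izrek_dvaB}. Take any vertex $x\in A$, say $x=a_i$. Pick any $y=a_j$ with $j\neq i$ (such a $j$ exists whenever $k\ge 2$). Then
\[
N(a_i)\cup N(a_j)=(B\setminus\{b_i\})\cup(B\setminus\{b_j\})=B,
\]
because $b_i\in B\setminus\{b_j\}$ and $b_j\in B\setminus\{b_i\}$. The analogous computation with $x=b_i$ and $y=b_j$, $j\ne i$, shows $N(b_i)\cup N(b_j)=A$. Thus condition~(3) of Theorem~\ref{izrek_dvaB} is satisfied.

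Therefore, by Theorem~\ref{izrek_dvaB}, $\cigB(G_1^k)=\cigBA(G_1^k)=2$. There is no real obstacle here: the whole argument reduces to the neighborhood calculation above. The only thing to be careful about is the trivial case $k=1$, in which $G_1^k$ has no edges and the formula of the lemma would not apply in the stated form; for all $k\ge 2$ the verification goes through as described.
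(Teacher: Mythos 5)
Your proof is correct and takes essentially the same route as the paper: the paper gives no separate argument for this lemma, merely remarking beforehand that $G_1^k$ satisfies condition (3) of Theorem~\ref{izrek_dvaB}, and your computation $N(a_i)=B\setminus\{b_i\}$, $N(a_i)\cup N(a_j)=B$ is exactly the verification that remark relies on. (One cosmetic caveat, which the paper also glosses over: the theorem is stated for connected graphs, while $G_1^2\cong 2K_2$ is disconnected; the direction (3)$\Rightarrow$(1),(2) does not actually use connectivity, so the conclusion still holds.)
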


For an arbitrary positive integer $k$, let $G_2^k$ be the graph obtained from $G_1^k$ by adding a universal vertex $u$ (a vertex adjacent to all vertices of $G_1^k$).

\begin{lemma}
\label{lem:G2-A-BA}
$\cigA(G_2^k)\ge k+1$ and $\cigBA(G_2^k)\ge k+1$.
\end{lemma}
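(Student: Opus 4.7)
The plan is to produce, for each of the two game variants, a Bob strategy that forces at least $k+1$ rounds. The universal vertex $u$ is the key feature of $G_2^k$: selecting $u$ protects every other vertex from the current color, and deleting $u$ returns the graph $G_1^k$, for which the previous two lemmas give sharp information. Bob's overall idea is either to play $u$ himself when he has the first move, or to use a pairing response along the missing matching $M$ when Alice has the first move.

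For $\cigBA(G_2^k)\ge k+1$, Bob's strategy is to pick $u$ as his first move. Because $u$ is universal, every remaining vertex becomes protected, so round $1$ must end with $C^1=\{u\}$, and Bob himself ended the round. Under the BA-rule the next round is started by the player who did not end the previous round, so Alice starts round $2$; from there, what is played on $G_2^k-u=G_1^k$ is exactly an AB-independence coloring game. By Lemma~\ref{lem:G1-A-AB}, $\cigAB(G_1^k)\ge k$, so the game lasts at least $1+k=k+1$ rounds in total.

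For $\cigA(G_2^k)\ge k+1$, I would argue by induction on $k$, with base case $k=1$: $G_2^1$ is $P_3$ and $\cigA(P_3)=2$. For the inductive step assume $\cigA(G_2^{k-1})\ge k$, and describe Bob's strategy against Alice's first move. If Alice opens by selecting $u$, then round $1$ ends immediately with $C^1=\{u\}$, every subsequent round is still started by Alice, and the remaining game is an A-independence coloring game on $G_1^k$; Lemma~\ref{lem:G1-A-AB} yields at least $k$ further rounds. Otherwise, by symmetry Alice plays some $a_i$, and Bob replies with $b_i$. The pair $\{a_i,b_i\}$ is independent because $a_ib_i\in M$ is a non-edge, and a short case check shows every remaining vertex is adjacent to $a_i$ or $b_i$ (the universal vertex $u$ to both; the other $a_j$ to $b_i$; the other $b_j$ to $a_i$), so round $1$ closes with $C^1=\{a_i,b_i\}$. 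The graph $G_2^k-\{a_i,b_i\}$ is isomorphic to $G_2^{k-1}$, and since in the A-game Alice again starts the next round, Bob can continue by his inductive strategy to force at least $k$ more rounds, giving $1+k=k+1$ in all.

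The only point that needs care is that Bob's pairing response in the A-game really does end the first round; once that protection claim is verified, the induction and the earlier lemma combine cleanly. I do not expect any serious obstacle beyond that verification, since after round $1$ the residual graph in each branch is exactly a smaller instance of the same family (or $G_1^k$ itself), and the version of the sub-game that is inherited (A- or AB-) matches the turn order we need to invoke the available lower bound.
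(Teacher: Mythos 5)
Your proof is correct and uses essentially the same strategy as the paper: Bob opens with the universal vertex $u$ in the BA-game, and in the A-game answers Alice's move $x$ in $K_{k,k}-M$ with its matching partner along $M$, so that each round colors at most two vertices and $u$ occupies a round by itself. The only presentational difference is that you package the A-game count as an induction on $k$ (reducing to $G_2^{k-1}$ after each paired round) and invoke Lemma~\ref{lem:G1-A-AB} for the residual graph, whereas the paper counts the $k+1$ rounds directly; both are valid.
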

\begin{proof}
First, we prove that $\cigA(G_2^k)\ge k+1$. Suppose that Alice and Bob play an A-independence coloring game on $G_2^k$. Since Alice starts each round of the game,
she may either start a round by playing the vertex $u$ (after which the round is finished) or she chooses a vertex $x$ in $G_1^k$ (after which Bob can finish the round by playing the vertex $y$, incident with the edge $xy\in M$). In any case, with this strategy, Bob can enforce at least $k+1$ rounds in the game.

Suppose that Alice and Bob play an BA-independence coloring game on $G_2^k$. Bob's first move is to choose the vertex $u$. After this move, no other vertex can receive color $1$, since $u$ is a universal vertex. Therefore, the first round of the game is over, and it is Alice's turn. In the same way as before we note that Bob has a strategy to enforce that each further round takes exactly two moves. In this way, $k+1$ rounds will be played in the game on $G_2^k$. Hence, $\cigBA(G_1)\ge k+1$. \qed
\end{proof}

\begin{lemma}
\label{lem:G2-AB-B}
$\cigAB(G_2^k)=\cigB(G_2^k)=3$.
\end{lemma}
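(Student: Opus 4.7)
The plan is to prove the two equalities by matching lower and upper bounds. The lower bounds $\cigAB(G_2^k)\ge 3$ and $\cigB(G_2^k)\ge 3$ I will obtain for free from Theorem~\ref{prop2}: the graph $G_2^k$ contains the triangle $\{u,a_1,b_2\}$, so $\chi(G_2^k)=3$.

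The crucial structural fact I will prove and then exploit in both games is the following. If in some round two distinct vertices from the same part of $G_1^k$, say $a_i$ and $a_{i'}$ in $A$, have been chosen, then no vertex of $B\cup\{u\}$ can still join that round, so the round must continue inside $A$ and will color all of $A$ before it ends (and symmetrically with $A$ and $B$ interchanged). The justification is direct: $u$ is adjacent to every vertex, and in $G_2^k$ a vertex $b_j$ is non-adjacent to $a_\ell$ only when $\ell=j$, hence $b_j$ is adjacent to at least one of $a_i,a_{i'}$ as soon as $i\neq i'$.

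For the AB-game I propose that Alice open with the universal vertex $u$; then round 1 ends immediately with $C^1=\{u\}$, and round 2 is started by Bob on $G_2^k-u=G_1^k$. Whichever class Bob selects from with his first move in round 2, Alice will mirror by choosing any other vertex of the same class, and the observation forces round 2 to color that whole class. Round 3 then contains only the opposite, independent class, which is colored in a single round. For the B-game I will describe a similar strategy. If Bob opens round 1 with $u$, the continuation coincides with the AB-analysis after round 1. Otherwise Bob picks, say, some $a_i\in A$, and Alice mirrors with an $a_{i'}\in A$, forcing round 1 to color all of $A$; the graph that remains is the star with center $u$ and leaves $B$, in which either Bob picks $u$ (so round $2=\{u\}$ and round 3 handles $B$) or Bob picks some $b_j$ and Alice mirrors with $b_{j'}$, forcing round 2 to color $B$ and leaving $\{u\}$ for round 3. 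Either way exactly three rounds are played.

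The main obstacle I expect is the careful verification of the structural observation and of the legality of Alice's mirror responses at every stage: one must rule out any tactic by which Bob could slip a vertex of the opposite class, or the universal vertex $u$, into an ongoing round after Alice's mirror move. The observation is exactly what rules this out, because Alice mirrors immediately after Bob's opening move of the round, activating the observation at the earliest possible moment. Once this is checked, the bookkeeping of the three rounds is routine and implicitly uses that $k\ge 2$ so that Alice always has an available partner within the same class.
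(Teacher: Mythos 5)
Your proof is correct and follows essentially the same route as the paper: the lower bound comes from $\chi(G_2^k)=3$ (the paper phrases this as non-bipartiteness via Theorems~\ref{izrek_dva} and~\ref{izrek_dvaB}), and the upper bound rests on Alice playing $u$ to close a round and then mirroring inside the part Bob opened, which is exactly the content of Lemma~\ref{lem:G1-B-BA} that the paper cites instead of re-deriving. Your explicit case analysis for the B-game is in fact more careful than the paper's one-line remark that ``the same strategy works'' there.
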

\begin{proof}
Since $G_2^k$ is not bipartite, Theorems~\ref{izrek_dva} and~\ref{izrek_dvaB} imply that $\cigAB(G_2^k)>2$ and $\cigB(G_2^k)>2$. 

Suppose that Alice and Bob play an AB-independence coloring game on $G_2^k$. We present the strategy of Alice. She starts the game by choosing the vertex $u$, by which the first round ends. Now, it is Bob's turn. Since $G_2^k-u$ is isomorphic to $G_1^k$, we infer by Lemma~\ref{lem:G1-B-BA} that Alice can enforce that only two rounds will be played in the rest of the game. Thus $\cigAB(G_2^k)=3$. The same strategy works for Alice in a B-independence coloring game, hence $\cigB(G_2^k)=3$. \qed
\end{proof}

For an arbitrary positive integer $k$, we construct the graph $G_3^k$ as follows. The set of vertices of $G_3^k$ partitions into three vertex subsets, $X,Y$ and $Z$, which are all independent sets of vertices in $G_3^k$. The set $X$ consists of only one vertex $x$, which is adjacent precisely to all vertices of $Y$. Next, $Y=\{y_1,y_2,\ldots ,y_{4k}\}$, and $Z$ partitions into $s={{4k}\choose{2k}}$ subsets $Z_1,Z_2,\ldots ,Z_{s}$ with $|Z_i|=2k$ for all $i\in [{{4k}\choose{2k}}]$. For any $(2k)$-subset $Y_i$ in $Y$ there exists a unique set $Z_i$ such that all vertices in $Y_i$ are adjacent to all vertices in $Z_i$.  

Since $G_3^k$ is a bipartite graph with partite sets $X\cup Z$ and $Y$, and $x\in X$ is adjacent to all vertices of $Y$, we directly infer by Theorem~\ref{izrek_dva} the value of the A-independence and AB-independence game chromatic numbers of $G_3^k$.
\begin{lemma}
\label{lem:G3-AB-A}
\label{lemma_G3}
$\cigA(G_3^k)=\cigAB(G_3^k)=2$.
\end{lemma}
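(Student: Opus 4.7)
The plan is to apply Theorem~\ref{izrek_dva} directly; the lemma really is just a verification that $G_3^k$ meets the structural characterization given there, so the whole argument reduces to checking two properties of $G_3^k$, and there should be no real obstacle.

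First, I would verify that $G_3^k$ is bipartite with bipartition $(X\cup Z,\, Y)$. By construction the three sets $X$, $Y$ and $Z$ are each independent in $G_3^k$, and every edge of $G_3^k$ has one endpoint in $Y$: the edges incident with $x$ all go to $Y$, and the edges of the form $yz$ with $z\in Z_i$ require $y\in Y_i\subseteq Y$. Consequently there are no edges between $X$ and $Z$, nor between distinct $Z_i$ and $Z_j$, so $X\cup Z$ is an independent set and $(X\cup Z, Y)$ is a valid bipartition.

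Second, I would point out that the vertex $x\in X\subseteq X\cup Z$ is, by the definition of $G_3^k$, adjacent to every vertex of $Y$. Hence condition (3) of Theorem~\ref{izrek_dva} is satisfied with $X_1=X\cup Z$, $X_2=Y$, the index $i=1$, and the distinguished vertex $x$ playing the role of the vertex of $X_i$ adjacent to all of $X_j$.

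Finally, since $G_3^k$ clearly has at least one edge (for example $xy_1$), the equivalence $(1)\Leftrightarrow (2)\Leftrightarrow (3)$ of Theorem~\ref{izrek_dva} applies, and the fact that (3) holds gives $\cigA(G_3^k)=\cigAB(G_3^k)=2$, which completes the proof.
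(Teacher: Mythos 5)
Your proof is correct and takes exactly the same route as the paper: the paper also observes that $G_3^k$ is bipartite with partite sets $X\cup Z$ and $Y$, that $x$ is adjacent to all of $Y$, and then cites Theorem~\ref{izrek_dva}. Your write-up merely makes the bipartiteness check explicit (and, for full rigor, one could also note that $G_3^k$ is connected, as Theorem~\ref{izrek_dva} assumes, which holds since $x$ dominates $Y$ and every $Z_i$ attaches to $Y$).
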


The BA-independence game chromatic number of $G_3^k$ needs some extra work. 

\begin{lemma}
\label{lem:G3-BA}
$\cigBA(G_3^k)\le 4$.
\end{lemma}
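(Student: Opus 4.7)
The plan is to exhibit a concrete Alice strategy in a BA-independence coloring game on $G_3^k$ and verify, by case analysis on Bob's first move, that the number of rounds never exceeds four.

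If Bob opens with $x$ or with some $z\in Z$, Alice replies with an arbitrary vertex of $Z$ or with $x$, respectively. The combined neighborhood of the two chosen vertices covers all of $Y$, so every vertex of $Y$ is protected in round $1$; the rest of the independent set $Z$ is appended to $C^1$, and round $2$ colors the independent set $Y$, giving only two rounds. The nontrivial case is Bob's opening $y_i\in Y$, to which Alice replies with $x$. Then $x$ protects $Y$ and $y_i$ protects every block $Z_j$ with $y_i\in Y_j$, so the still-playable vertices of round $1$ form the independent set $\tilde Z=\bigcup_{j\colon y_i\notin Y_j}Z_j$ and are all appended to $C^1$. The total number of moves in round $1$ equals $2+2k\binom{4k-1}{2k}$, which is even, so Alice plays the last move and Bob starts round $2$. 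Setting $Y'=Y\setminus\{y_i\}$ and $Z^*=\bigcup_{j\colon y_i\in Y_j}Z_j$, the residual graph $G'=G_3^k[Y'\cup Z^*]$ is bipartite, and as $j$ ranges over the surviving blocks the sets $Y_j\setminus\{y_i\}$ range over all $(2k-1)$-subsets of $Y'$, each of which is the common neighborhood of a block of $2k$ vertices of $Z^*$. It therefore suffices to force the game on $G'$ (with Bob to move) to end within three more rounds.

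For round $2$ I propose a ``$Y'$-first'' strategy for Alice: at each of her turns she plays a vertex of $Y'$ whenever one is currently playable, otherwise she plays any playable vertex. Writing $Y^*=C^2\cap Y'$ and $W=C^2\cap Z^*$, the decisive counting point is that each of Bob's $Z$-moves protects only $2k-1$ of the $4k-1$ vertices of $Y'$, while Alice contributes a $Y'$-vertex to $Y^*$ at each of her turns for as long as some $Y'$-vertex is unblocked. If $Y^*=Y'$, then $W=\emptyset$ and the remaining graph is the independent set $Z^*\setminus W$, colored in round $3$, giving three rounds in total. Otherwise, maximality of $C^2$ in round $2$ automatically forces $S\cap Y^*\neq\emptyset$ for every block $Z_S$ with $Z_S\cap(Z^*\setminus W)\neq\emptyset$, so in the residual bipartite graph $G''=G'-C^2$ every $Z^*$-vertex has at most $2k-2$ neighbors in $P:=Y'\setminus Y^*$. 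I then aim to show, by arguing that $|Y^*|\ge 2k+1$ and hence $|P|\le 2k-2$, that there is a block $Z_S\subseteq Q:=Z^*\setminus W$ with $S\supsetneq P$; any vertex of this block is adjacent to all of $P$ in $G''$, so $G''$ satisfies condition (3) of Theorem~\ref{izrek_dva} and hence finishes in two further rounds by the relevant analogue of that theorem applied to the appropriate starting player of round $3$. Combined with round $1$, this yields $\cigBA(G_3^k)\le 4$.

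The main obstacle is the round-$2$ analysis establishing the lower bound $|Y^*|\ge 2k+1$. Bob may play from pairwise nearly-disjoint $Z$-blocks to block as much of $Y'$ as possible and thereby force Alice off her $Y'$-first rule at an early move, so one must track carefully when Alice is compelled to switch to $Z$-play, and show that the switch comes late enough (after at least $2k+1$ $Y'$-vertices are in $Y^*$) to leave $|P|\le 2k-2$. The quantitative lever is the strict inequality $|Y'|=4k-1>2(2k-1)$, which forces Bob to spend at least three $Z$-moves from disjoint blocks before he can cover all of $Y'\setminus Y^*$; by that time Alice's mirror replies have put enough $Y'$-vertices into $Y^*$ to guarantee the desired residual structure, after which rounds $3$ and $4$ close out the game.
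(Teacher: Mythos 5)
The central case of your strategy is built on an illegal move. In $G_3^k$ the vertex $x$ is adjacent to \emph{every} vertex of $Y$, so once Bob opens round $1$ with $y_i\in Y$, the vertex $x$ is protected and cannot be added to the round-$1$ independent set; your prescribed reply ``Alice plays $x$'' violates the rules of the game. (Your two easy cases are fine: if Bob opens with $x$ or with a vertex of $Z$, two rounds indeed suffice.) This is not a cosmetic slip: the whole difficulty of the lemma is that Bob denies Alice the vertex $x$ in round $1$ precisely by opening in $Y$, and the paper's proof is organized entirely around the question of \emph{when} Alice earns the right to play $x$, namely as the opening move of a later round. Concretely, the paper has Alice answer Bob's $Y$-moves in round $1$ with $Z$-moves so that exactly $2k$ vertices of $Y$ are coloured in round $1$; since $Z$-vertices are absorbed in whole blocks of even size $2k$, the parity of the number of coloured $Y$-vertices determines who begins the next round. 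Bob therefore starts round $2$, Alice arranges that exactly one further $Y$-vertex is coloured there, so she starts round $3$, opens it with $x$ (now legal), and the game ends in round $4$.

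Even if the opening were repaired, your round-$2$ counting fails. You claim Bob needs three pairwise disjoint $Z$-blocks to cover $Y'\setminus Y^*$, citing $4k-1>2(2k-1)$. But $2(2k-1)+1=4k-1$: Bob plays $z_1\in Z_{S_1}$, Alice colours one vertex $y_1\in Y'\setminus S_1$, and Bob plays $z_2\in Z_{S_2}$ with $S_2=Y'\setminus(S_1\cup\{y_1\})$, a legitimate $(2k-1)$-set avoiding $y_1$. After three moves every vertex of $Y'$ is coloured or protected, so $|Y^*|=1$ and $|P|=4k-2$, and the residual graph does not satisfy condition (3) of Theorem~\ref{izrek_dva}: each surviving $Z$-vertex meets only $2k-2$ of the $4k-2$ remaining $Y$-vertices, and no $Y$-vertex meets all surviving blocks. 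Your bound $|Y^*|\ge 2k+1$ is therefore unobtainable by the $Y'$-first rule, and the appeal to Theorem~\ref{izrek_dva} (which in any case presupposes that Alice starts the relevant round, a parity you never control) collapses. The argument needs the paper's parity bookkeeping, not a covering estimate.
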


\begin{proof}
Suppose that Alice and Bob play a BA-independence coloring game on $G_3^k$. The first round of the game is started by Bob. Suppose that during the first round an odd number of vertices in $Y$ have been colored. The number of vertices in $Z$ that are colored in every round of the game is even, since whenever one of the vertices in $Z_i$ is chosen in a round, all $2k$ vertices in a set $Z_i$ will be chosen in that round; hence, assuming that $x$ is not chosen in a given round, an odd number of vertices are chosen in that round, and so the player who starts the next round is different from the one who started the current round. This means that Alice will start the second round. If Alice starts the second round, she chooses $x$, and the game then lasts only three rounds (notably, Alice choosing $x$ implies that no vertex from $Y$ and all the remaining vertices from $Z$ will be chosen in round $2$, hence for round $3$ only an independent set of uncolored vertices remains). 
Clearly, Bob wants to prevent this situation, and wants to achieve that the first round of the game ends with an even number of colored vertices in $Y$.   

Consider the first round of the game. If Bob chooses the vertex $x$ in the first move of the game, then the game ends in two rounds by the same argument as in the previous paragraph. Next, if Bob starts the game by choosing a vertex of $Z$, then Alice chooses $x$, and again the game lasts only two rounds. Therefore, Bob starts the first round of the game by choosing a vertex $y'$ of $Y$. Alice responds by choosing a vertex of $Z$, by which she protects $2k$ vertices in $Y$. Now, Bob must color another vertex in $Y$, otherwise the round ends with only one chosen vertex from $Y$, namely $y'$. (Indeed, if Bob's second move is not in $Y$, then Alice can choose in her second move a vertex of $Z$, which is adjacent to all uncolored vertices in $Y$, and so only one vertex of $Y$ receives color $1$.) Since now two vertices of $Y$ are colored, Alice plays in $Y$. The procedure continues in the same way, that is, Bob must choose a vertex of $Y$ to prevent that an odd number of vertices are chosen from $Y$ in round $1$. In this way, the first round ends with $2k$ vertices from $Y$ that are colored by $1$.
Thus, Bob starts the second round of the game. From the same reason as in the first round of the game, he starts the round by coloring a vertex in $Y$ (which has not been given color $1$ in the first round). Now, Alice chooses to color a vertex in $Z$, which is adjacent to $y'$ and to all yet uncolored vertices in $Y$. In this way, in the second round of the game only one vertex in $Y$ is selected, therefore Alice starts the third round of the game. She colors vertex $x$ as the first move of the third round, and so all vertices of $Z$ are also colored in this round. The game ends in round $4$, hence the strategy of Alice to keep the number of colors bounded by $4$ is realized. \qed
\end{proof}

Finally, we show that the B-independence game chromatic number can be arbitrarily large in $G_3^k$.

\begin{lemma}
\label{lem:G3-B}
$\cigB(G_3^k)\ge k$.
\end{lemma}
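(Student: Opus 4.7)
The plan is to exhibit a strategy for Bob that colours at most three vertices of $Y$ per round; since $|Y|=4k$, this forces at least $\lceil 4k/3\rceil\ge k$ rounds and hence $\cigB(G_3^k)\ge k$.

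The backbone of Bob's strategy exploits the fact that in the B-independence coloring game he has the first move of every round. I would have him open round $r$ by selecting an uncoloured vertex $y_1^{(r)}\in Y$; since $x$ is adjacent to every vertex of $Y$, this prevents $x$ from being coloured during round $r$. Alice's only way to keep the $Y$-consumption rate from collapsing is to respond with another $y_2^{(r)}\in Y$ (the alternative is addressed at the end). Bob then plays a vertex $z_1\in Z_{i_r}$ whose associated $(2k)$-subset $Y_{i_r}$ is chosen to (i) avoid $\{y_1^{(r)},y_2^{(r)}\}$, so that $z_1$ is a legal move in round $r$, and (ii) intersect the set $S_Y^{(r')}$ of $Y$-vertices coloured in every previous round $r'<r$, so that $Z_{i_r}$ has not already been coloured in an earlier round --- here I use the observation that once a single vertex of $Z_m$ is played in a round, the whole of $Z_m$ is coloured in that round, since after such a play no vertex of $Y_m$ can be coloured in the same round and the remaining vertices of $Z_m$ therefore stay playable throughout. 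Playing $z_1$ protects all $2k$ vertices of $Y_{i_r}$ in round $r$. After Alice's next move $y_3^{(r)}\in Y$ from the still-unprotected part of $Y$, Bob plays $z_2\in Z_{i'_r}$ whose $Y_{i'_r}$ satisfies the analogous conditions and additionally is such that $Y_{i_r}\cup Y_{i'_r}$ covers $Y\setminus(T_r\cup\{y_1^{(r)},y_2^{(r)},y_3^{(r)}\})$, where $T_r=\bigcup_{r'<r}S_Y^{(r')}$. After this move every vertex of $Y$ is coloured or protected in round $r$, so no further $Y$-vertex can be added, and the remainder of the round only colours vertices of $Z$.

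The hard part will be verifying the simultaneous feasibility of (i), (ii) and the covering requirement in every round $r\le k$. I would argue by induction on $r$: if Bob's strategy has succeeded through round $r-1$, then $|T_r|=3(r-1)$ is partitioned into $r-1$ disjoint triples, so (ii) forces each of $Y_{i_r},Y_{i'_r}$ to allocate at least $r-1$ of its $2k$ elements inside $T_r$, leaving at most $2k-(r-1)$ elements in $Y\setminus T_r$. The number of $Y$-vertices needing protection in round $r$ equals $|Y\setminus T_r|-3=4k-3r$, so the covering is realisable provided $2(2k-(r-1))\ge 4k-3r$, which holds for every $r$; avoiding the at most three designated vertices $y_1^{(r)},y_2^{(r)},y_3^{(r)}$ costs only a constant number of elements out of the $|Y\setminus T_r|=4k-3(r-1)$ available, and this presents no obstacle for $r\le k$.

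Finally, if Alice deviates by playing a vertex of $Z$ on her second or fourth move in a round, Bob uses the same covering idea to play an additional $z$ whose associated $(2k)$-subset of $Y$, together with the previously chosen $Y_{i_r}$ (or by itself, if this is her second move), covers all still-unprotected vertices of $Y$; the round then ends with at most two (or even one) $Y$-vertex coloured, which is only better for Bob. Consequently, in every round at most three vertices of $Y$ are coloured, so the game cannot finish before $Y$ is exhausted, which requires at least $\lceil 4k/3\rceil\ge k$ rounds. This proves $\cigB(G_3^k)\ge k$. \qed
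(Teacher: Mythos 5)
Your proposal is correct and follows essentially the same route as the paper: Bob opens every round with a vertex of $Y$ to block $x$, then uses two sets $Z_i$ whose neighbourhoods together cover the uncoloured part of $Y$ to cap the number of $Y$-vertices coloured per round at three, so that $k-1$ rounds cannot exhaust $|Y|=4k$. The only difference is presentational: you spell out the feasibility conditions (in particular that each chosen $Z_i$ must meet every previous round's coloured $Y$-set in order to be still uncoloured), which the paper leaves implicit.
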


\begin{proof}
Let Alice and Bob play a B-independence coloring game on $G_3^k$. Bob is to start each round of the game. The strategy of Bob is to starts each round of the game (except the last one) by choosing a vertex of $Y$, and so preventing $x$ to be selected. The goal of Bob is that as few vertices in $Y$ as possible receive the color of the round. Hence, if Alice responses in $Z$, she helps Bob to realize his plan. Thus, assume that Alice chooses a vertex in $Y$. The next move of Bob is to choose a vertex $z$ in $Z_i$, for some $i\in [s]$. Clearly, by the rules of the game, $z$ is adjacent to $2k$ vertices in $Y$, which are different from the vertices that have been selected. (In this way, Bob protects $2k$ vertices in $Y$). Again, assume that Alice chooses another vertex in $Y$ as her next move. Let $Y_1$ be the set of vertices from $Y$ chosen to that point. In the next move, Bob chooses $z'\in Z_j$, for some $j\ne i$, such that $N(z)\cup N(z')=Y-Y_1$, that is, all vertices from $Y$ that have not been chosen in previous moves are now protected. After the last move of Bob, no vertices from $Y$ can be selected in round $1$, therefore all vertices of the sets $Z_i$, such that there are no edges between $Z_i$ and $Y_1$ must be selected. (It is clear that even if Alice has not  chosen only vertices from $Y$, but has also chosen some vertices from $Z$ the outcome is the same, only that $Y_1$ has fewer vertices). That is, in round $1$ vertices from a set $Y_1$, where $|Y_1|\le 3$, have been selected, and all vertices from the sets $Z_i$ such that $N(Y_1)\cap Z_i=\emptyset$ have also been selected.

The game continues in the same way. Thus, in round $2$, a set $Y_2\subset Y-Y_1$ is selected, where $|Y_2|\le 3$. In addition, in round $2$ exactly the vertices from the sets $Z_i$ are selected with $Z_i$ being adjacent to at least one vertex from $Y_1$ and to no vertex of $Y_2$. More generally, in round $j$ a set $Y_j$ of at most three vertices from $Y$ is selected, and the corresponding (i.e., non-adjacent to $Y_j$ and previously non-selected) vertices from $Z$ are also selected. This implies that after round $k-1$, there are at most $3(k-1)$ vertices in $Y$, which belong to $Y_1\cup\cdots\cup Y_{k-1}$, that were selected. There also exist vertices in $Z$ that have not been selected after round $k-1$, and they are the vertices from the sets $Z_i$ that have at least one neighbor from each of the sets $Y_1,\ldots, Y_{k-1}$ (by definition of the adjacencies in $G_3^k$). In addition, $x$ has not been selected after $k-1$ rounds. We conclude that $\cigB(G_3^k)\ge k$.  \qed
\end{proof}

From the above lemmas, we obtain the following results about possible differences  between pairs of independence coloring game invariants. 

\begin{theorem}
\label{thm:compare}
Let $\overline{\cig}$ and $\widetilde{\cig}$ be any two invariants in $\{\cigA,\cigB,\cigAB,\cigBA\}$. For each ordered pair $(\overline{\cig},\widetilde{\cig})$, except perhaps $(\cigAB,\cigA)$ and $(\cigBA,\cigA)$, there exist graphs $G$ such that the difference $\overline{\cig}(G)-\widetilde{\cig}(G)$ is arbitrarily large. 
\end{theorem}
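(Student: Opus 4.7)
The plan is to recognize that Theorem~\ref{thm:compare} is essentially a bookkeeping corollary of Lemmas~\ref{lem:G1-A-AB}--\ref{lem:G3-B}. There are twelve ordered pairs of distinct invariants in $\{\cigA,\cigB,\cigAB,\cigBA\}$; omitting the two excluded pairs $(\cigAB,\cigA)$ and $(\cigBA,\cigA)$ leaves ten pairs to handle, and the three graph families $G_1^k$, $G_2^k$, $G_3^k$ already constructed in this section will supply arbitrarily large differences for each of them.

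First, I would consult the bounds established for $G_1^k=K_{k,k}-M$: we have $\cigA(G_1^k)\ge k$ and $\cigAB(G_1^k)\ge k$, while $\cigB(G_1^k)=\cigBA(G_1^k)=2$. Thus the single family $G_1^k$ simultaneously witnesses arbitrarily large differences for the four ordered pairs $(\cigA,\cigB)$, $(\cigA,\cigBA)$, $(\cigAB,\cigB)$, and $(\cigAB,\cigBA)$. Next, $G_2^k$ (obtained from $G_1^k$ by attaching a universal vertex) satisfies $\cigA(G_2^k)\ge k+1$ and $\cigBA(G_2^k)\ge k+1$, while $\cigAB(G_2^k)=\cigB(G_2^k)=3$; taking differences, this family covers $(\cigA,\cigAB)$, $(\cigBA,\cigAB)$, and $(\cigBA,\cigB)$. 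Finally, $G_3^k$ satisfies $\cigA(G_3^k)=\cigAB(G_3^k)=2$ and $\cigBA(G_3^k)\le 4$, while $\cigB(G_3^k)\ge k$; this accounts for the remaining three pairs $(\cigB,\cigA)$, $(\cigB,\cigAB)$, and $(\cigB,\cigBA)$.

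To finish, I would tabulate the ten pairs above and verify that together they exhaust all ordered pairs of distinct invariants other than the two explicitly excluded ones. There is no substantive obstacle beyond this verification: the real work has been concentrated in the preceding lemmas, and the only care needed is to ensure the enumeration of pairs is exhaustive and that each assigned family indeed yields a difference that grows with $k$. It is perhaps worth remarking that the two pairs $(\cigAB,\cigA)$ and $(\cigBA,\cigA)$ are necessarily left open here, because the question of whether these differences can even be positive is left unresolved by the methods of this section.
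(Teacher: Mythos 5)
Your proposal is correct and follows exactly the same route as the paper: the theorem is indeed just the bookkeeping consequence of Lemmas~\ref{lem:G1-A-AB}--\ref{lem:G3-B}, with $G_1^k$ covering $(\cigA,\cigB)$, $(\cigA,\cigBA)$, $(\cigAB,\cigB)$, $(\cigAB,\cigBA)$, the graph $G_2^k$ covering $(\cigA,\cigAB)$, $(\cigBA,\cigAB)$, $(\cigBA,\cigB)$, and $G_3^k$ covering the three pairs with $\cigB$ first, which is precisely the assignment the authors give. Your enumeration of the ten non-excluded ordered pairs and the resulting lower bounds on the differences all check out.
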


Lemmas~\ref{lem:G1-A-AB} and~\ref{lem:G1-B-BA} take care of the pairs $(\cigA,\cigB),(\cigA,\cigBA),(\cigAB,\cigB)$ and $(\cigAB,\cigBA)$, Lemmas~\ref{lem:G2-A-BA} and~\ref{lem:G2-AB-B} settle the pairs $(\cigA,\cigAB),(\cigBA,\cigAB)$ and $(\cigBA,\cigB)$, while Lemmas~\ref{lem:G3-AB-A}, \ref{lem:G3-BA} and~\ref{lem:G3-B} take care of all ordered pairs with $\cigB$ as the first entry. Among twelve possible ordered pairs, only $(\cigAB,\cigA)$ and $(\cigBA,\cigA)$ remain unsettled. It is left as an open problem, whether there is a graph such that 
the difference $\cigAB(G)-\cigA(G)$, respectively $\cigBA(G)-\cigA(G)$, is arbitrarily large. 

\section{The independence game chromatic numbers versus the game chromatic number}
\label{sec:othergames}

One of the most important games in graphs is the coloring game. Given a graph $G$ and a fixed set of $k$ colors $C$, the two players, called Alice and Bob, alternately color the vertices of a given graph with Alice going first. When coloring a vertex they are using an arbitrary color from $C$ as long as the proper coloring rule is fulfilled after the move. The goal of Alice is that in the end all vertices of $G$ are colored, while Bob is trying to prevent this. The minimum number of colors needed for Alice to win the coloring game on $G$ is the {\em game chromatic number} of $G$, denoted by $\chi_g(G)$.

A related graph invariant is obtained from the {\em marking game}, played on a graph $G$. It is a game of two players, Alice and Bob, in which the players alternatively mark vertices of a given graph. Given a positive integer $k$, the game proceeds as follows. Alice has the first move. The aim of Bob is to reach a situation where an unmarked vertex has $k$ marked neighbors, while Alice wants to prevent such a situation. The smallest $k$ for which Alice can prevent in any step of the game that an unmarked vertex has $k$ marked neighbors, is the {\em game coloring number}, denoted by $col_g(G)$. Obviously, $\chi_g(G)\leq col_g(G)$.

Faigle et al.~\cite{faigle} proved that the game coloring number as well as the game chromatic number of a forest is at most $4$. In the previous section we proved that the independence game chromatic number of a tree can be arbitrary large. Hence the following is true:

\begin{theorem}
For an arbitrary positive integer $k$ there exists a graph $G$ such that $\chi_{ig}(G)-col_g(G)\geq k$.
\end{theorem}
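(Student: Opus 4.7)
The plan is to combine the two ingredients mentioned immediately before the theorem: the result of Faigle et al.\ that every forest $F$ satisfies $col_g(F)\le 4$, and the fact established in Section~\ref{sec:trees} that the independence game chromatic number of a tree is unbounded. Since the difference we want to force is only on one side of the inequality, it suffices to exhibit a single family of trees on which $\chi_{ig}$ is arbitrarily large while $col_g$ stays bounded by the universal constant~$4$.

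Concretely, given a positive integer $k$, I would set $m=k+4$ and invoke Corollary~\ref{cor:trees} (together with the Remark following Theorem~\ref{thm:trees} that handles the $A$-version) to obtain a tree $T=G_m$ with
\[
\cigA(T),\ \cigAB(T),\ \cigBA(T),\ \cigB(T)\ \ge\ m \ =\ k+4.
\]
In particular, whichever of the four versions is chosen to represent $\chi_{ig}$ in the statement, $\chi_{ig}(T)\ge k+4$ holds. Since $T$ is a tree, hence a forest, Faigle et al.~\cite{faigle} give $col_g(T)\le 4$. Subtracting, $\chi_{ig}(T)-col_g(T)\ge(k+4)-4=k$, which is the required inequality.

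There is essentially no obstacle here once the two cited facts are in place; the only point worth flagging is a notational one. The symbol $\chi_{ig}$ is used in the paper only when all four versions of the game agree on a graph, so strictly speaking the statement needs to be read as asserting that \emph{for each} of the four versions $\chi_{ig}^{\star}\in\{\cigA,\cigAB,\cigBA,\cigB\}$ there is a graph realizing the desired gap. That is exactly what Corollary~\ref{cor:trees} together with the Remark after Theorem~\ref{thm:trees} provides, so the same tree $T=G_{k+4}$ works uniformly across all four versions and the proof reduces to a one-line subtraction.
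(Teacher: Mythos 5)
Your proposal is correct and is exactly the argument the paper intends: combine the Faigle et al.\ bound $col_g(F)\le 4$ for forests with Corollary~\ref{cor:trees} (and the Remark covering the $A$-version) and subtract. The paper gives no more detail than this, so your write-up, including the sensible clarification of what $\chi_{ig}$ must mean here, matches and slightly sharpens the intended proof.
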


\begin{theorem}
For an arbitrary positive integer $k$ there exists a graph $G$ such that $\chi_{ig}(G)-\chi_g(G)\geq k$.
\end{theorem}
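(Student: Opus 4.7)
The plan is to mimic the proof of the previous theorem (about $\chi_{ig}(G)-col_g(G)$), simply replacing $col_g$ by $\chi_g$. Concretely, I would exhibit a tree as the witness graph $G$, because trees are precisely where the two quantities diverge dramatically: the independence game chromatic number of a tree is unbounded (Corollary~\ref{cor:trees}), while the game chromatic number of every forest is bounded by a small constant.

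First I would recall the result of Faigle et al.~\cite{faigle}, already cited in this section, which gives $\chi_g(T)\le 4$ for every forest $T$. This is the only external input needed. Then, for a given positive integer $k$, I would apply Corollary~\ref{cor:trees} (whose existence has already been proved from Theorem~\ref{thm:trees}) to obtain a tree $T_k$ with $\chi_{ig}(T_k)\ge k+4$. Here $\chi_{ig}$ stands for any of the four versions of the independence game chromatic number, since Corollary~\ref{cor:trees} covers all three nontrivial ones and the remark following Theorem~\ref{thm:trees} handles $\cigA$.

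Combining the two bounds yields
\[
\chi_{ig}(T_k)-\chi_g(T_k)\ \ge\ (k+4)-4\ =\ k,
\]
so $T_k$ is the required graph. There is essentially no obstacle: the entire content of the proof has already been carried out, and the argument is literally one line once we invoke Faigle et al.\ together with Corollary~\ref{cor:trees}. The only small point to mention is that the inequality $\chi_g(G)\le col_g(G)$ means the present theorem is in fact \emph{implied} by the previous one; I would write the proof independently anyway, directly citing~\cite{faigle} for the bound $\chi_g\le 4$ on forests, so that the result reads cleanly as a standalone statement.
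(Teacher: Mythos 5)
Your proposal is correct and is essentially identical to the paper's own (very short) argument: the paper likewise derives this theorem directly by combining the Faigle et al.\ bound $\chi_g\le 4$ for forests with the result of Section~\ref{sec:trees} that trees can have arbitrarily large independence game chromatic number. Your extra remarks (covering all four versions via Corollary~\ref{cor:trees} and the remark after Theorem~\ref{thm:trees}, and noting the implication from the $col_g$ version) are accurate and only make the argument more explicit.
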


A natural question that arises is whether there are also graphs that have the game coloring number (respectively, the game chromatic number) arbitrarily larger than the independence game chromatic number. 
We will answer these questions in the affirmative for both mentioned invariants.

First, we mention that for the game coloring number the desired result can be obtained by using a theorem of Bartnicki et al.~\cite{bartnicki}, which states that for an arbitrary  positive integer $k$ there exists a positive integer $n$ such that $col_g(K_{1,n}\square K_{1,n}) > k$.
It is easy to prove that $\cigA(K_{1,n}\square K_{1,n})=2=\cigAB(K_{1,n}\square K_{1,n})$ for every $n$. In addition, with some more effort one can verify that $\cigB(K_{1,n}\square K_{1,n})\leq 4$ and $\cigBA(K_{1,n}\square K_{1,n})\leq 4$.
All these results imply that the difference $col_g(G)-\cig(G)$ can be made arbitrarily large for some graphs $G$ (note that $\cig$ stands for any of the four independence coloring game invariants). Nevertheless, we will prove the stronger result that also $\chi_g(G)-\cig(G)$ can be made arbitrarily large, which clearly implies the former result.

\begin{lemma}
\label{lem:chig-G3}
The game chromatic number of the graph $G_3^k$ is at least $k$.
\end{lemma}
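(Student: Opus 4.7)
The plan is to describe a strategy for Bob in the coloring game on $G_3^k$ with palette $\{1,\ldots,k-1\}$ that forces an uncolored $Y$-vertex to see all $k-1$ colors in its neighborhood. Bob maintains a pool $R \subseteq Y$ of uncolored candidate vertices, starting with $R = Y$. Over his $k-1$ moves he will color $x$ together with $Z$-vertices $z_1, z_2, \ldots$ so that the colors placed on these vertices exhaust the palette and every $z_l$ is adjacent to all vertices of $R$. Then any surviving $y \in R$ has $N(y) \supseteq \{x, z_1, z_2, \ldots\}$ and already sees all $k-1$ colors, so $y$ cannot be properly colored.

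On each of his turns Bob proceeds as follows. If $x$ is still uncolored, he colors it with a legal color. Otherwise he selects a color $c_j \in \{1,\ldots,k-1\}$ not previously used by him, together with a $(2k)$-subset $Y_{i_j} \subseteq Y$ such that $R \subseteq Y_{i_j}$ and no $Y$-vertex in $Y_{i_j}$ is currently colored $c_j$, and then colors an uncolored $z_j \in Z_{i_j}$ with $c_j$. His colors are chosen so that at the end the colors on $\{x, z_1, z_2, \ldots\}$ together exhaust $\{1,\ldots,k-1\}$. Whenever Alice colors a vertex of $R$, that vertex is removed from $R$; when Bob makes a $Z$-move, $R$ is replaced by $R \cap Y_{i_j}$, which by construction equals $R$. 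Before Bob's first $Z$-move $|R|$ may be as large as $4k$, so at that move Bob chooses $Y_{i_1}$ to be a $(2k)$-subset of $R$, pinning $|R| = 2k$ thereafter.

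The main obstacles are to verify feasibility of Bob's move and that $R$ stays nonempty. Feasibility depends on the defining property of $G_3^k$, namely that every $(2k)$-subset of $Y$ is realized as some $Y_i$ with a companion set $Z_i$ of size $2k$. Since at most $k-1$ vertices of $Y$ are ever colored, and therefore at most $k-1 < 2k$ of them can carry color $c_j$, the required $(2k)$-subset $Y_{i_j}$ always exists; similarly $|Z_{i_j}| = 2k > k-1$ guarantees an uncolored $z_j$. Nonemptiness of $R$ follows from the observation that any $y \in R$ is adjacent to $x$ and to every $z_l$ Bob has already played, so Alice cannot color $y$ with a color already used by Bob. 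Consequently each Alice move shrinks $|R|$ by at most one, and since Alice makes at most $k-1$ moves the final pool satisfies $|R| \geq 2k - (k-1) = k+1 > 0$. The surviving vertices of $R$ are then uncolorable, proving $\chi_g(G_3^k) \geq k$.
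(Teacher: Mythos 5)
Your proof is correct and follows essentially the same strategy as the paper's: Bob maintains a large pool of uncolored vertices in $Y$ and exploits the fact that every $2k$-subset of $Y$ has a dedicated companion set $Z_i$ to keep placing fresh colors on $Z$-vertices adjacent to the whole surviving pool, so that after $k-1$ of his moves the pool (still nonempty by the counting $2k-(k-1)>0$) consists of vertices seeing all colors. The only cosmetic difference is that you have Bob spend one color on the universal-to-$Y$ vertex $x$, whereas the paper's Bob places all colors on $Z$-vertices; both variants work.
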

\begin{proof}
Assume that Alice and Bob play a coloring game on $G_3^k$ with the set of colors $\{1,\ldots , m\}$, where $m<k$. Bob's goal is to create a situation in which there is an uncolored vertex with all colors from $\{1,2,\ldots ,m\}$ in its neighborhood, which he achieves with the following strategy. Regardless of the first move of Alice, there exists a set $Z_i$ in $Z$, such that no vertices from $Z_i$ and no vertices from $N(Z_i)$ have been colored in the first move. Let $Y'=N(Z_i)$. Note that $|Y'|=2k$. Bob's goal is to make one of the vertices in $Y'$ in the situation described above. For this purpose he colors a vertex of $Z_i$ with color $1$, by which no vertex from $Y'$ will receive color $1$. If Alice colors in one of her next moves the vertex $x$ with color $1$, she gains nothing, but if she colors it with one of the colors in $\{2,3,\ldots ,m\}$, she even helps Bob to accomplish his goal.  Further, if Alice plays vertices from $Z_i$ (say with color $1$), there are enough vertices in $Z_i$ (notably $|Z_i|=2k$), so that Bob can choose them and color them by different colors. Hence, the only possible strategy of Alice to prevent Bob from achieving his goal is to play in some of the vertices in $Y'$, and color them with a new color, say color $2$. By this she also prevents that this color (color $2$) is used by Bob in vertices of $Z_i$. However, Bob's strategy is then as follows. Let $Y''$, where $Y''\subset Y'$ be the set of vertices selected by Alice until a given point in the game. Since it is Bob's turn, he selects a set $Z_i$ such that $N(Z_i)\cap Y''=\emptyset$, and chooses the last color that was used by Alice in $Y''$ if this color is different from the colors that have been chosen by Bob in $Z$ (otherwise, Bob chooses any new color). Such a set $Z_i$ exists since after the $j$th move of Alice, at most $j$ vertices have been selected in $Y$, where $j\le m<k<|Y'|=2k$, and so there is a set $Z_i$, whose vertices are adjacent to all vertices of $Y'-Y''$ (and to some other vertices of $Y$ not colored by Alice). In this way, after Bob's $m$th move there is a vertex of $Y'$ in whose neighborhood all $m$ colors appear. Hence Bob wins the game on $G_3^k$, and $\chi_g(G_3^k)\ge k$.\qed \end{proof} 

By Lemma~\ref{lemma_G3}, we have $\cigA(G_3^k)=\cigAB(G_3^k)=2$, and by Lemma~ \ref{lem:G3-BA}, we have $\cigBA(G_3^k)\le 4$. Combining these observations with  Lemma~\ref{lem:chig-G3}, we infer the following result.

\begin{corollary}
There exist graphs $G$ such that $\chi_g(G)-\cigA(G)$, $\chi_g(G)-\cigAB(G)$, and  $\chi_g(G)-\cigBA(G)$ is arbitrarily large. 
\end{corollary}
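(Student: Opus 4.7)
The plan is to observe that this corollary is essentially immediate from three lemmas already established about the graph family $G_3^k$. Specifically, I would fix an arbitrary positive integer $k$ (after suitably shifting to guarantee nonnegativity), and take the witness graph to be $G = G_3^k$.

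From Lemma~\ref{lemma_G3} we have $\cigA(G_3^k) = \cigAB(G_3^k) = 2$, while Lemma~\ref{lem:G3-BA} gives $\cigBA(G_3^k) \le 4$. On the other hand, Lemma~\ref{lem:chig-G3} yields $\chi_g(G_3^k) \ge k$. Combining these bounds directly:
\begin{align*}
\chi_g(G_3^k) - \cigA(G_3^k) &\ge k - 2,\\
\chi_g(G_3^k) - \cigAB(G_3^k) &\ge k - 2,\\
\chi_g(G_3^k) - \cigBA(G_3^k) &\ge k - 4.
\end{align*}
Since all three lower bounds tend to infinity as $k \to \infty$, each of the three differences can be made arbitrarily large by taking $k$ sufficiently large. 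To obtain, for a given target value $t$, a single graph $G$ realizing all three differences at least $t$ simultaneously, one chooses $k \ge t + 4$ and sets $G := G_3^k$.

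There is no real obstacle here; the work has already been done in the preceding lemmas. The only point worth remarking on is that the three lemmas were proved separately for three different invariants on the same family of graphs, so no additional compatibility argument is needed — a single choice of witness $G_3^k$ simultaneously certifies that $\cigA$, $\cigAB$, and $\cigBA$ all remain bounded (by $2$, $2$, and $4$ respectively), while $\chi_g$ grows without bound. This completes the argument.
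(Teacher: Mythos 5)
Your proof is correct and follows exactly the paper's argument: the corollary is deduced by combining Lemma~\ref{lemma_G3} ($\cigA(G_3^k)=\cigAB(G_3^k)=2$), Lemma~\ref{lem:G3-BA} ($\cigBA(G_3^k)\le 4$), and Lemma~\ref{lem:chig-G3} ($\chi_g(G_3^k)\ge k$) on the single witness family $G_3^k$. No further comment is needed.
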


Let $k$ be an arbitrary positive integer, and let $K$ be a copy of the complete bipartite graph $K_{k,k}$. A graph $G_4^k$ is obtained from $K$ by deleting the edges of a perfect matching $M$ in $K$, and adding two (non-adjacent) vertices $u$ and $v$ and make them adjacent to all vertices of $K$. Let the partite sets of $K$ be $A=\{a_1,\ldots ,a_{k}\}$ and $B=\{b_1,\ldots ,b_{k}\}$, the matching $M=\{a_1b_1,\ldots ,a_{k}b_{k}\}$, and $K'=K-M$.

\begin{lemma}
\label{lema_gB}
$\chi_g(G_4^k)=k+2$ and $\cigB(G_4^k)=3$.
\end{lemma}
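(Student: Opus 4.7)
The plan is to establish the two equalities separately.

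For $\cigB(G_4^k)=3$, the lower bound follows from Theorem~\ref{izrek_dvaB}: the set $\{u,a_1,b_2\}$ is a triangle (because $a_1b_2\notin M$), so $G_4^k$ is not bipartite and condition~(3) of that theorem fails. For the upper bound, I would exhibit Alice's response to each possible first move of Bob. If Bob selects $u$ or $v$, Alice selects the other; round~$1$ ends with $\{u,v\}$ coloured (every remaining vertex is adjacent to one of them), and what remains is isomorphic to $G_1^k$, which satisfies $\cigB(G_1^k)=2$ by Lemma~\ref{lem:G1-B-BA}. If Bob selects a vertex of $A$, Alice answers with another vertex of $A$; the common neighbourhood of any two vertices of $A$ is $\{u,v\}\cup B$, so round~$1$ cannot extend outside $A$ and terminates only when all of $A$ is coloured. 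The residual graph is the double star $K_{2,k}$ with parts $\{u,v\}$ and $B$, which can be finished in two more rounds. The case of Bob opening in $B$ is symmetric.

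For $\chi_g(G_4^k)=k+2$ I treat the bounds separately. For the upper bound, Alice plays $u$ with colour~$1$ as her first move, and on her next turn (if $v$ is still uncoloured) she plays $v$ with colour~$1$. This is legal since $v\not\sim u$, and any vertex of $A\cup B$ already coloured by Bob has necessarily avoided colour~$1$ (being adjacent to $u$). From this point onward, every uncoloured vertex $w\in A\cup B$ has at most $k+1$ colours forbidden in its neighbourhood --- at most two contributed by $\{u,v\}$ and at most $k-1$ contributed by the $k-1$ neighbours of $w$ inside $K_{k,k}-M$ --- so with a palette of size $k+2$ at least one legal colour always remains for $w$, and Alice completes the colouring.

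For the lower bound I argue by induction on $k$, together with the side identity $\chi_g(K_{k,k}-M)=k$. The base case $k=1$ gives $G_4^1=C_4$ and $\chi_g(C_4)=3=1+2$. For the inductive step, suppose only $k+1$ colours are available. Bob's strategy is as follows: if Alice plays $a_i$ (respectively, $b_i$) with colour~$c$, Bob responds by playing $b_i$ (respectively, $a_i$) with the same colour~$c$, which is legal because $a_ib_i\notin E$; if Alice plays $u$ (respectively, $v$) with colour~$c$, Bob responds by playing $v$ (respectively, $u$) with any colour different from~$c$. In the former case, the residual game takes place on the graph induced by $\{u,v\}\cup(A\setminus\{a_i\})\cup(B\setminus\{b_i\})$, which is isomorphic to $G_4^{k-1}$; the matched colour~$c$ is forbidden on every remaining vertex (via adjacency to $a_i$ or $b_i$), leaving only $k$ colours effectively available. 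Since Alice moves next and $\chi_g(G_4^{k-1})=k+1>k$ by induction, Bob wins. In the latter case, the residual game is on $K_{k,k}-M$ with $k-1$ effective colours (both colours appearing on $\{u,v\}$ are forbidden on all of $A\cup B$), and a matching-pair strategy --- Bob pairs $a_i$ with $b_i$ in the same colour, forcing distinct colours on distinct pairs --- yields $\chi_g(K_{k,k}-M)=k>k-1$, so Bob wins here as well.

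The principal obstacle is setting up the induction cleanly, so that Bob's response in each case produces a faithful smaller instance: in case~(i) one must verify that the residual subgraph is indeed an isomorphic copy of $G_4^{k-1}$ with exactly one colour uniformly forbidden and with Alice as the next mover, so that the inductive hypothesis applies verbatim; in case~(ii) one needs the short side-lemma $\chi_g(K_{k,k}-M)=k$, which is itself proved by the matching-pair strategy together with the standard $\Delta+1$ upper bound.
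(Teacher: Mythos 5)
Your proposal is correct and follows essentially the same route as the paper: for $\cigB$ the same round-by-round analysis (Bob opens, Alice either pairs $u$ with $v$ or completes one side of $K_{k,k}-M$, leaving a graph that finishes in two further rounds), and for $\chi_g$ the same two ingredients --- Alice neutralizes $u$ and $v$ early for the upper bound, and Bob's matched-pair response of colouring $b_i$ with the colour Alice just used on $a_i$ for the lower bound. Your packaging of the lower bound as an induction on $k$ with the side identity $\chi_g(K_{k,k}-M)=k$ is a more careful formalization of the paper's informal ``after the pairs are coloured, the remaining vertices of $K'$ see all available colours'' argument, but it is not a different idea.
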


\begin{proof}
First, let us prove that $\chi_g(G_4^k)=k+2$. Suppose that Alice and Bob are playing a coloring game on $G_4^k$ with the set of colors $[k+2]$. We prove that Alice wins the game. She starts the game by coloring $u$ with color $1$.
It is easy to see that it is not in Bob's favour to be the first one to play in $K'$.
Hence, Bob colors $v$ with color $2$ in his second move, since he wants to maximize the number of colors in the game. (Note that colors $1$ and $2$ can no longer be used in the game, since $u$ and $v$ are adjacent to all vertices of $K'$.)  Now, it is Alice's turn, and she colors a vertex $a_1$ in $K'$ with color $3$. Bob's optimal next move is to color the opposite vertex $b_1$ with $3$. (Indeed, if Bob chooses a vertex other than $b_1$, Alice colors $b_1$ with any color other than $3$, say color $q$, by which no vertex of $A$ will be able to get color $q$. This implies that all vertices of $A$ may receive color $q$ and all vertices of $B$ may receive color $3$, so Alice wins the game.)
In her next move, Alice colors $a_2$ with color $4$, and Bob follows by coloring the $b_2$ with color $4$ to prevent Alice from ensuring an easy win as described in the previous move. In this way, they continue, until all vertices are properly colored with colors $1,2,\ldots ,k+2$, where the vertices of $K'$ that are connected along a (missing) edge in $M$ get the same color.  Clearly, Alice cannot win this game on $G$ if less than $k+2$ colors are available. The strategy of Bob if only $m$, $m<k+2$, colors are available, is the same as described, yet, after $m$ pairs of vertices in $K'$ are given different colors, there are still $2k-2m$ vertices of $K'$ that have not been colored, and have all $m$ colors in their neighborhoods. Thus, $\chi_g(G_4^k)=k+2$.

Now, let us prove that $\cigB(G_4^k)=3$. Assume that Alice and Bob play a B-independence coloring game on $G_4^k$. Bob starts each round of the game. Without loss of generality, suppose that he selects as the first move of the game the vertex $u$. Hence, no other vertex except $v$ can be selected in round $1$, and $v$ must be selected by Alice in her first move. Therefore Bob starts the second round of the game, and he is forced to start playing in $K'$. Without loss of generality he selected $a_1$. Alice's response is to select another vertex of $A$, say $a_2$, after which all vertices of $A$ must be selected in round $2$.  Consequently, with round $3$ the game ends, since only vertices of (the independent set) $B$ remain uncolored. Thus, $\cigB(G_4^k)=3$. \qed
\end{proof}

\begin{corollary}
There exist graphs $G$ such that $\chi_g(G)-\cigB(G)$ is arbitrarily large. 
\end{corollary}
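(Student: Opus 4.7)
The plan is to invoke the family of graphs $G_4^k$ constructed immediately before Lemma~\ref{lema_gB}. Since Lemma~\ref{lema_gB} already pins down the exact values $\chi_g(G_4^k) = k+2$ and $\cigB(G_4^k) = 3$, the corollary should follow with essentially no extra work: given an arbitrary positive integer $N$, I would set $k = N+3$ (or any $k \ge N+3$) and take $G = G_4^k$, obtaining
\[
\chi_g(G) - \cigB(G) = (k+2) - 3 = k - 1 \ge N.
\]
Since $N$ was arbitrary, the difference $\chi_g(G) - \cigB(G)$ is unbounded over the family $\{G_4^k\}_{k \ge 1}$.

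The only possible obstacle is conceptual rather than technical: one should confirm that the two equalities in Lemma~\ref{lema_gB} refer to the same graph $G_4^k$ (they do, by the construction preceding that lemma). No new game-theoretic analysis is needed, since the strategies of Alice and Bob for both the coloring game and the B-independence coloring game on $G_4^k$ were already handled in the proof of Lemma~\ref{lema_gB}. Hence the corollary is a direct consequence, and the proof can be written as a single sentence citing Lemma~\ref{lema_gB} and noting that $k+2-3$ grows without bound as $k \to \infty$.
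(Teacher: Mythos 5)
Your proposal is correct and matches the paper exactly: the corollary is stated as an immediate consequence of Lemma~\ref{lema_gB}, with $\chi_g(G_4^k)-\cigB(G_4^k)=(k+2)-3=k-1$ growing without bound. No further argument is needed.
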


\section{Concluding remarks}

In this paper, we introduced a new two-player game on graphs, which is related to graph coloring. As with any newly introduced invariant, many open problems arise concerning the independence game chromatic numbers.

Due to the close relation of the first round of the independence coloring game to the independent domination game (alias, the competition-independence game), a natural question is when does these two games (the first round of our game and the entire independence domination game) coincide. The reason that this is not clear lies in the possibility that Alice may benefit from not obtaining a largest possible independent set in the first round, since this may help her in the next rounds of the independence coloring game. 

Several open problems are related to results of Section~\ref{sec:extremal}. Theorems~\ref{izrek_dva} and~\ref{izrek_dvaB} present characterizations of graphs with the independence game chromatic numbers equal to $2$, hence the following problem is natural.

\begin{problem}
For any invariant $\widetilde{\cig}\in\{\cigA,\cigB,\cigAB,\cigBA\}$, characterize the graphs $G$ for which $\widetilde{\cig}(G)=3$.
\end{problem}

In Section~\ref{sec:extremal} we also established that in split graphs $G$, $\cigA(G)=\omega(G)$, which triggers the following two problems. 

\begin{problem}
\label{prob}
For any invariant $\widetilde{\cig}\in\{\cigA,\cigB,\cigAB,\cigBA\}$, characterize the graphs $G$ for which $\widetilde{\cig}(G)=\omega(G)$.
\end{problem}

\begin{problem}
\label{probsplit}
For any invariant $\widetilde{\cig}\in\{\cigB,\cigAB,\cigBA\}$, characterize the split graphs $G$ for which $\widetilde{\cig}(G)=\omega(G)$.
\end{problem}

\noindent Problem~\ref{prob} could also be modified so that $\omega$ is replaced by $\chi$, yielding a superclass of the graphs with $\widetilde{\cig}(G)=\omega(G)$. Concerning Problem~\ref{probsplit}, note that $\widetilde{\cig}(G)$ either equals $\omega(G)$ or $\omega(G)+1$.

The following two questions were mentioned in Section~\ref{sec:compare}.

\begin{question}
Is there a graph $G$ such that $\cigAB(G)-\cigA(G)>0$, and if so, how large can the difference $\cigAB(G)-\cigA(G)$ be?
\end{question}

\begin{question}
Is there a graph $G$ such that $\cigBA(G)-\cigA(G)>0$, and if so, how large can the difference $\cigAB(G)-\cigA(G)$ be?
\end{question}

\noindent If the answers to the above questions are negative, it somehow confirms the intuition that starting each round is often not beneficial for Alice. 

In light of the results of Section~\ref{sec:compare}, which show that for almost all ordered pairs of the independence game chromatic numbers their difference can be arbitrarily large, it would be interesting to find if in some classes of graphs the invariants behave more tamely. For instance, are there some nice classes of graphs in which all four invariants have the same value?



\section*{Acknowledgements}

The authors acknowledge the financial support from the Slovenian Research Agency (research core funding No.\ P1-0297 and research projects J1-9109, J1-1693 and N1-0095).

\end{document}